\newtheorem{proposition}{Proposition}
\theoremstyle{definition}
\newtheorem{definition}{Definition}
\newtheorem{remark}{Remark}
\newcommand{\guillemets}[1]{``#1''}
\newcommand{\ve}[1]{\mathbf{#1}}
\begin{document}

\title{Isostables, isochrons, and Koopman spectrum \\ for the action-angle representation of stable fixed point dynamics}
\author{A. Mauroy}
\email{alex.mauroy@engr.ucsb.edu}
\author{I. Mezic}
\email{mezic@engr.ucsb.edu}
\author{J. Moehlis}
\email{moehlis@engineering.ucsb.edu}

\affiliation{Department of Mechanical Engineering, University of California Santa Barbara, Santa Barbara, CA 93106, USA}

\begin{abstract}

For asymptotically periodic systems, a powerful (phase) reduction of the dynamics is obtained by computing the so-called isochrons, i.e. the sets of points that converge toward the same trajectory on the limit cycle. Motivated by the analysis of excitable systems, a similar reduction has been attempted for non-periodic systems admitting a stable fixed point. In this case, the isochrons can still be defined but they do not capture the asymptotic behavior of the trajectories. Instead, the sets of interest---that we call \guillemets{isostables}---are defined in literature as the sets of points that converge toward the same trajectory on a stable slow manifold of the fixed point. However, it turns out that this definition of the isostables holds only for systems with slow-fast dynamics. Also, efficient methods for computing the isostables are missing.

The present paper provides a general framework for the definition and the computation of the isostables of stable fixed points, which is based on the spectral properties of the so-called Koopman operator. More precisely, the isostables are defined as the level sets of a particular eigenfunction of the Koopman operator. Through this approach, the isostables are unique and well-defined objects related to the asymptotic properties of the system. Also, the framework reveals that the isostables and the isochrons are two different but complementary notions which define a set of action-angle coordinates for the dynamics. In addition, an efficient algorithm for computing the isostables is obtained, which relies on the evaluation of Laplace averages along the trajectories. The method is illustrated with the excitable FitzHugh-Nagumo model and with the Lorenz model. Finally, we discuss how these methods based on the Koopman operator framework relate to the global linearization of the system and to the derivation of special Lyapunov functions.

\end{abstract}

\keywords{Nonlinear dynamics, isochrons, excitable systems, Koopman operator, action-angle coordinates, Lyapunov function}

\maketitle

\section{Introduction}

Among the abundant literature on networks of coupled systems, a vast majority of studies focus on asymptotically periodic systems (i.e. coupled oscillators) while only a few consider coupled systems characterized by a stable fixed point. This is particularly surprising since the latter can exhibit excitable regimes that are relevant in many situations (e.g. neuroscience \cite{Izi_book}). One reason for this disproportion is probably related to phase reduction methods. For asymptotically periodic systems, powerful phase reduction methods turn the (complex, high-dimensional) system into a phase oscillator evolving on the circle, making the analysis of complex networks more amenable to mathematical analysis \cite{Brown,Malkin,Winfree}. In contrast, in the case of systems admitting a stable fixed point, the development of equivalent reduction methods is more recent and a general framework is still in its infancy.

The goal of reduction methods is to assign the same value to a (codimension-$1$) set of initial conditions that are characterized by the same asymptotic behavior, in turn designing a coordinate on the state space. In the case of asymptotically periodic systems, these sets of identical (phase) value are the so-called \emph{isochrons}, which approach the same trajectory on the limit cycle \cite{Winfree2}. This concept has been recently extended to heteroclinic cycles \cite{Thomas}. For systems admitting a stable focus, the isochrons (or isochronous sections) can still be defined as the sets of points that are invariant under a particular return map \cite{Gine,Sabatini}. This notion is of particular interest in the case of weak foci (i.e. with purely imaginary eigenvalues) and non-smooth vector fields, where the existence of isochrons is a non-trivial problem related to the stability of the fixed point. However, the isochrons provide in this case no information on the asymptotic convergence of the trajectories toward the fixed point and are not useful for the system reduction. (Note also that they do not exist for fixed points with real eigenvalues.) Therefore, the isochrons must be complemented by another family of sets: the so-called \emph{isostables}.

Excitable systems are characterized by slow-fast dynamics with a stable fixed point and, in the plane, they admit a particular trajectory---the \emph{transient attractor} or slow manifold--- that temporarily attracts all the trajectories as they approach the fixed point. In this case, the isostables are naturally defined as the sets of points that converge to the same trajectory on the transient attractor \cite{Rabinovitch}. (Note that these sets are called \guillemets{isochrons} in \cite{Rabinovitch}, but we feel that the proper sense is \guillemets{isostables} instead, in order to avoid the confusion with the isochrons of foci studied in \cite{Gine,Sabatini}.) For non-planar systems possessing a multi-dimensional slow manifold or center manifold, a (more rigorous) framework was previously developed in \cite{Roberts1,Roberts2}. In that work, the sets of interest (called \guillemets{projection manifolds} in \cite{Roberts1}) are closely related to the notion of isostable and correspond to the invariant fibers of the (slow or center) manifold, i.e. the sets of initial conditions characterized by the same long-term behavior on that manifold. Through the reduction obtained with the isostables, excitable systems have been studied in various contexts (sensitivity to periodic pulses \cite{Coombes,Ichinose,Rabinovitch2}, network synchronization \cite{Masuda}, etc.).


Since the isostables provide a characterization of the system dynamics around the fixed point, their computation is also desirable for systems which do not contain multiple time scales (i.e. with no slow or center manifold). For instance, the computation of the isostables can be useful to achieve an optimal control that minimizes the time of convergence toward a steady state or to investigate the delay of convergence to a stable equilibrium in decision-making models \cite{Trotta}. But in these cases, a more general framework is required, which defines the isostables as particular (and unique) codimension-$1$ sets capturing the asymptotic behavior of the system. In addition, the computation of the isostables through backward integration \cite{Rabinovitch} or normal form of the dynamics \cite{Roberts2} is limited to a neighborhood of the slow manifold. In this context, an efficient method for computing the isostables in the entire basin of attraction is also missing.

In this paper, we propose a general framework for the reduction of systems admitting a stable fixed point, which is not limited to excitable systems with slow-fast dynamics. This approach is based on the spectral properties of the so-called \emph{Koopman operator} \cite{Mezic,MezicBana}. More precisely, we propose a \emph{general} and \emph{unique} definition of the isostables in terms of a particular eigenfunction of the Koopman operator. In addition, the framework yields an efficient method to compute the isostables in the whole basin of attraction. This method relies on the estimation of \emph{Laplace averages} along the trajectories and can be seen as an extension of the approach recently developed in \cite{Mauroy_Mezic} to compute the isochrons of limit cycles.

Viewed through the Koopman operator framework, the isostables and the isochrons appear to be two different but complementary concepts. On the one hand, they are different since they are related to the absolute value and to the argument, respectively, of the eigenfunction of the Koopman operator. On the other hand, they are complementary in the sense that they define a set of \emph{action-angle coordinates} for the system dynamics. This action-angle representation is related to important properties of the isotables, such as the global linearization of the dynamics and the derivation of special Lyapunov functions, that we discuss in the paper.

The paper is organized as follows. In Section \ref{sec_iso}, we introduce the concept of isostable in the context of the Koopman operator framework, both for linear and nonlinear systems. We also propose a rigorous definition of the isostables and discuss their main properties. The relation between the isostables and the Laplace averages is developed in Section \ref{sec_Laplace}. This provides an efficient algorithm for the computation of the isostables which is illustrated in Section \ref{sec_appli} for the excitable FitzHugh-Nagumo model and the Lorenz model. Finally, the related concepts of action-angle representation, global linearization, and Lyapunov function are discussed in Section \ref{sec_Lyap_lin}. Section \ref{sec_conclu} gives some concluding remarks.

\section{Isostables and Koopman operator}
\label{sec_iso}

The isostables of an asymptotically stable fixed point $\ve{x}^*$ are the sets of points that share the \emph{same asymptotic convergence} toward the fixed point. More precisely, trajectories with an initial condition on an isostable $\mathcal{I}_{\tau_0}$ simultaneously intersect the successive isostables $\mathcal{I}_{\tau_n}$ after a time interval $\tau_n-\tau_0$, thereby approaching the fixed point synchronously (Figure \ref{isostable}). The isostables partition the basin of attraction of the fixed point and define a new coordinate $\tau$ that satisfies $\dot{\tau}=1$ along the trajectories. Or equivalently, they define a coordinate $r \triangleq \exp(\lambda \tau)$ with the linear dynamics $\dot{r}=\lambda r$. This new coordinate can be used in a context of model reduction.

At this point, it is important to remark that this (intuitive) definition of isostable is not complete. Indeed, there exist an infinity of families of sets that satisfy the above-described property. But among these families, only one defines a smooth change of coordinates and is relevant to capture the asymptotic behavior of the trajectories. In this section, we will give a rigorous definition of this unique family of isostables. To do so, we first consider the particular case of linear systems. Then, we extend the concept to nonlinear systems, using the Koopman operator framework.

\begin{figure}[h]
\begin{center}
\includegraphics[width=5cm]{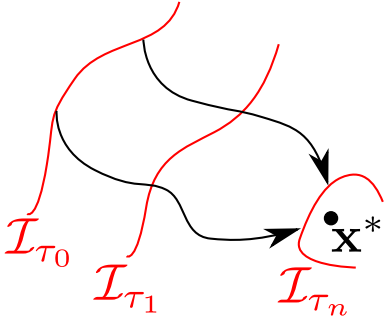}
\caption{Trajectories starting from the same isostable $\mathcal{I}_{\tau_0}$ are characterized by the same convergence toward the fixed point. They simultaneously intersect the successive isostables $\mathcal{I}_{\tau_n}$ and approach the fixed point synchronously.}
\label{isostable}
\end{center}
\end{figure}

\subsection{Linear systems}
\label{iso_linear}

Consider the stable linear system
\begin{equation}
\label{syst_lin}
\dot{\ve{x}}=\ve{A} \ve{x} \,, \quad \ve{x}\in \mathbb{R}^n\,,
\end{equation}
and assume that each eigenvalue $\lambda_j=\sigma_j+i\omega_j$ of the matrix $\ve{A}$ is of multiplicity $1$, has a strictly negative real part $\sigma_j<0$, and corresponds to the right eigenvector $\ve{v}_j$ (which is normalized, that is, $\|\ve{v}_j\|=1$). By convention, we sort the eigenvalues so that $\lambda_1$ is the eigenvalue related to the \guillemets{slowest} direction, that is
\begin{equation}
\label{hypo_eigen}
\sigma_j \leq \sigma_1 <0\,, \quad j=2,\dots,n \,.
\end{equation}
The flow induced by \eqref{syst_lin} is the continuous-time map $\phi:\mathbb{R}\times \mathbb{R}^n \mapsto \mathbb{R}^n$, that is, $\phi(t,\ve{x})$ is the solution of \eqref{syst_lin} with the initial condition $\ve{x}$. For linear systems, the flow is given by
\begin{equation}
\label{sol_lin}
\phi(t,\ve{x})=\sum_{j=1}^n s_j(\ve{x}) \ve{v}_j \, e^{\lambda_j t}\,,
\end{equation}
where $s_j(\ve{x})$ are the coordinates of the vector $\ve{x}$ in the basis $(\ve{v}_1,\dots,\ve{v}_n)$. The function $s_j(\ve{x})$ can be computed as the inner product $s_j(\ve{x})=\langle \ve{x},\ve{\tilde{v}}_j \rangle$, with $\ve{\tilde{v}}_j$ the eigenvectors of the adjoint $\ve{A}^*$, associated with the eigenvalues $\lambda^c_j=\sigma_j-i\omega_j$ and normalized so that $\langle \ve{v}_j,\ve{\tilde{v}}_j\rangle=1$. (Note that $s_j(\ve{x})$ is an eigenfunction of the so-called Koopman operator; see Section \ref{iso_nonlinear}.)

Next, we show that the isostables of linear systems are simply defined as the level sets of $|s_1(\ve{x})|=|\langle \ve{x},\ve{\tilde{v}}_1 \rangle|$. We consider separately the cases $\lambda_1$ real (with other eigenvalues real or complex)  and $\lambda_1$ complex (with other eigenvalues real or complex).

\subsubsection{Real eigenvalue $\lambda_1$}
\label{subsub_real}

When the eigenvalue $\lambda_1=\sigma_1$ is real, the trajectories induced by the flow \eqref{sol_lin} asymptotically approach the fixed point along the slowest direction $\ve{v}_1$ (since the eigenvalues are sorted according to \eqref{hypo_eigen}). Then, the trajectories characterized by the \emph{same coefficient} $|s_1(\ve{x})|\triangleq\exp(\sigma_1 \tau(\ve{x}))$ exhibit the same asymptotic convergence toward the fixed point:
\begin{equation}
\label{asympt_real}
\phi(t,\ve{x})=\ve{v}_1^{\pm} e^{\sigma_1(t+\tau(\ve{x}))}+\sum_{j=2}^n s_j(\ve{x}) \, \ve{v}_j \, \exp(\lambda_j t) \approx \ve{v}_1^{\pm} e^{\sigma_1(t+\tau(\ve{x}))} \quad \textrm{as } t\rightarrow \infty\,,
\end{equation}
where the notation $\ve{v}_1^{\pm}$ implies that either the vector $\ve{v}_1$ or $-\ve{v}_1$ must be considered. The initial conditions $\ve{x}$ of these trajectories therefore belong to the same isostable
\begin{equation}
\label{isostable_real}
\mathcal{I}_\tau=\left\{\ve{x}\in\mathbb{R}^n\Big|\ve{x}=e^{\sigma_1 \tau} \ve{v}_1^{\pm}  + \sum_{j=2}^n \alpha_j \, \ve{v}_j\,, \, \forall \alpha_j\in\mathbb{R}\right\}\,,
\end{equation}
which is obtained by considering $t=0$ in \eqref{asympt_real}. In this case, the isostables are the $(n-1)$-dimensional hyperplanes parallel to $\ve{v}_j$ for all $j>2$ (or equivalently, perpendicular to $\tilde{\ve{v}}_1$) (Figure \ref{linear_real}).

\begin{figure}[h]
\begin{center}
\subfigure[]{\includegraphics[width=5cm]{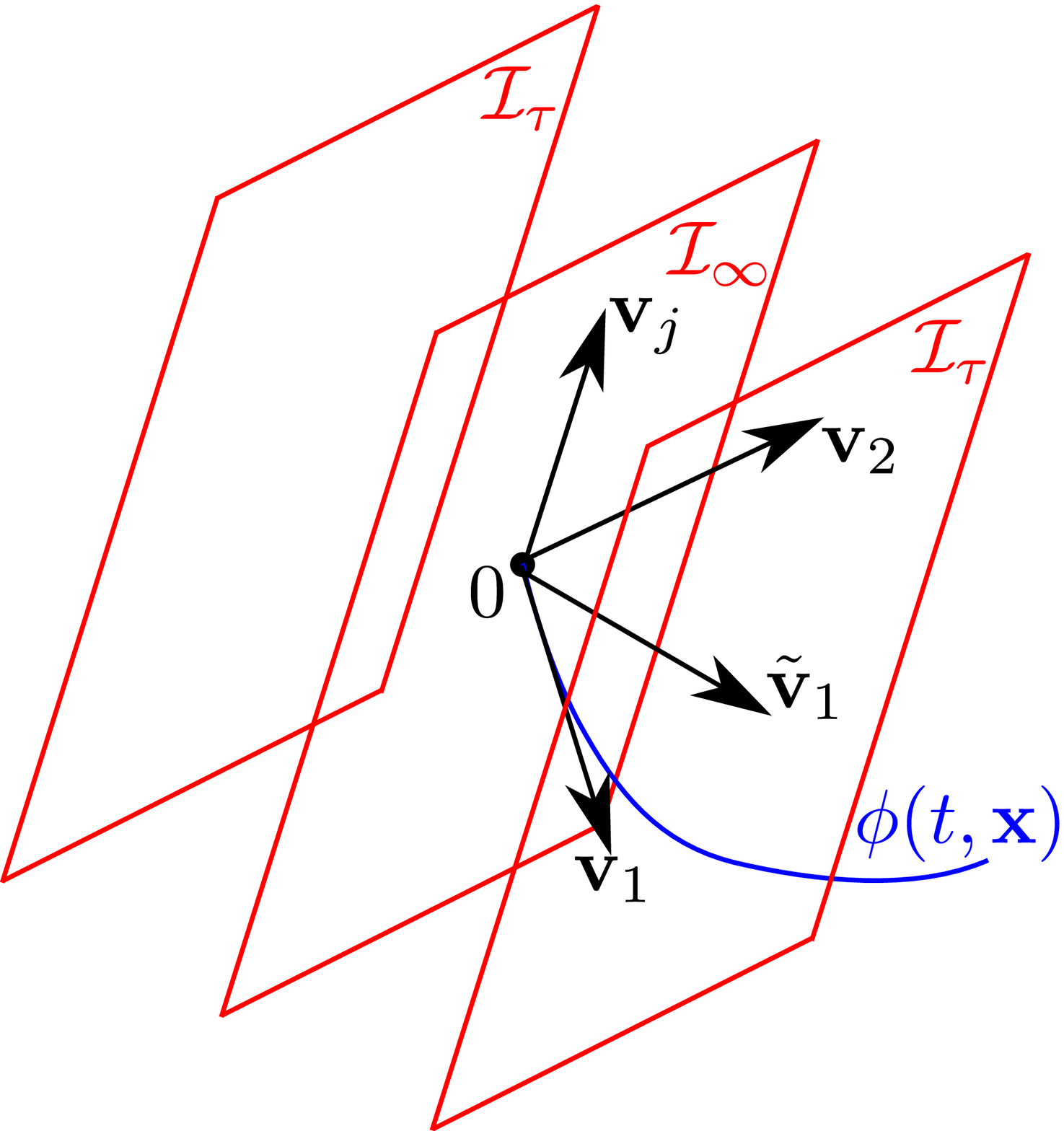}}
\hspace{1cm}
\subfigure[]{\includegraphics[width=5cm]{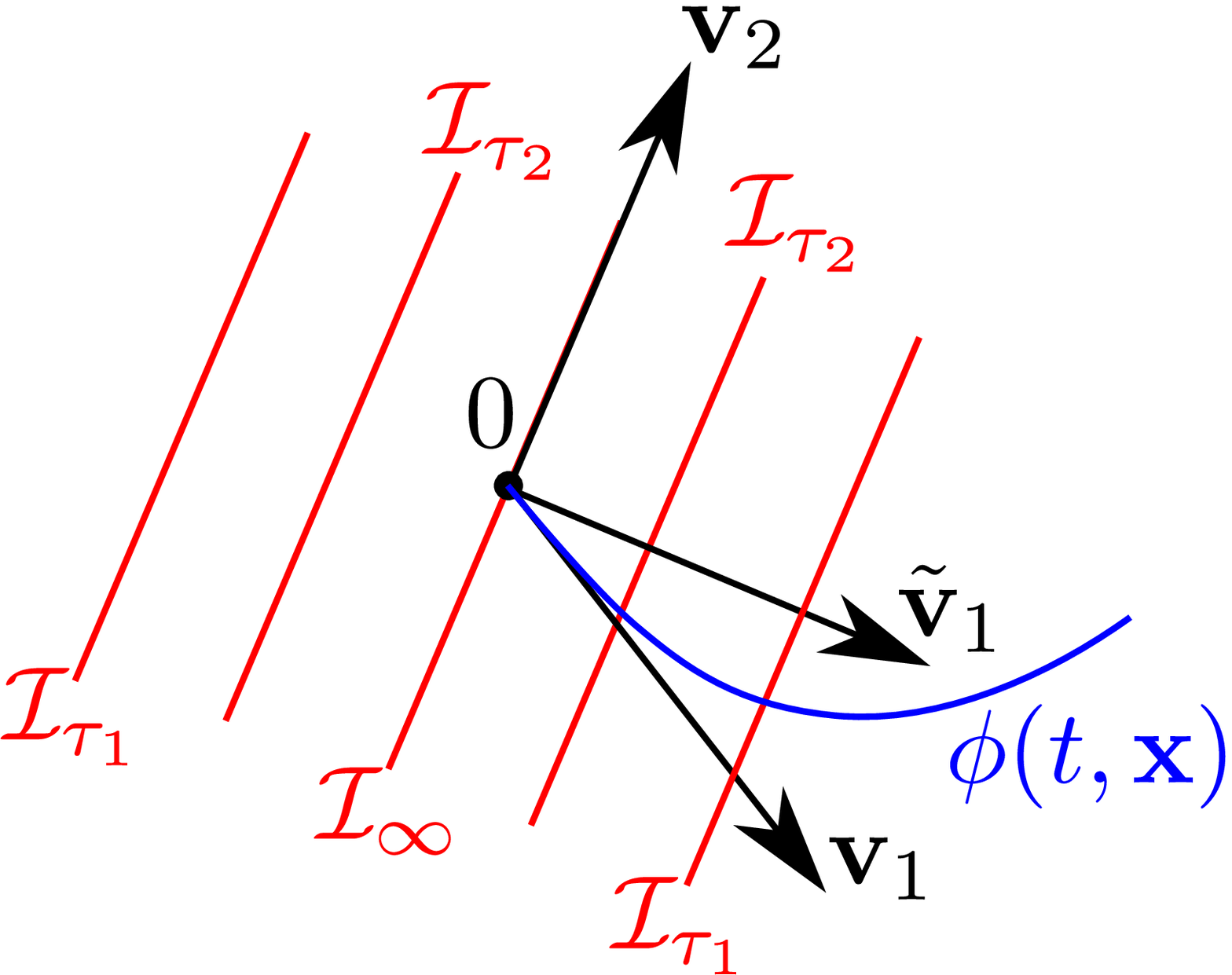}}
\caption{(a) The isostables of linear systems with a real eigenvalue $\lambda_1$ are the hyperplanes spanned by the eigenvectors $\ve{v}_j$, with $j>2$. The particular isostable $\mathcal{I}_\infty$ contains the fixed point. (b) For two-dimensional systems (or in the plane $\ve{v}_1-\ve{v}_2$), the isostables are pairs of parallel lines.}
\label{linear_real}
\end{center}
\end{figure}

\subsubsection{Complex eigenvalue $\lambda_1$}
\label{subsub_complex}

A system having a complex eigenvalue $\lambda_1$ can be transformed through the use of action-angle coordinates. Then, the isostables are obtained from the isostables \eqref{isostable_real} of the subsystem which is related to the action coordinates and which is only characterized by real eigenvalues $\sigma_j$. Consider a linear coordinate transformation that expresses the dynamics \eqref{syst_lin} in the (spectral) basis given by the vectors $\ve{v}_j$ (for $\lambda_j$ real) and $\Re\{\ve{v}_j\}$, $-\Im\{\ve{v}_j\}$ (for $\lambda_j=\lambda_{j+1}^c$ complex). (Note that $\Re\{\ve{v}_j\}$ and $\Im\{\ve{v}_j\}$ are not parallel since the two eigenvectors $\ve{v}_j$ and $\ve{v}_{j+1}$ are independent.) This is performed by diagonalizing $\ve{A}$ and by using the linear transformation
\begin{equation*}
\ve{T}=\left( \begin{array}{cc} 1 & 1\\-i & i \end{array} \right)
\end{equation*}
in each subspace spanned by a pair of complex eigenvectors ($\ve{v}_j$,$\ve{v}_{j+1}$). The dynamics become
\begin{equation*}
\begin{cases}
\dot{y}_j=\sigma_j y_j & j \in \{i \in \{1,\dots,n\} | \lambda_i \in \mathbb{R}\}\,, \\
\left(\begin{array}{c}\dot{y}_j \\ \dot{y}_{j+1}  \end{array} \right)
= \left(\begin{array}{cc} \sigma_j & -\omega_j \\ \omega_j & \sigma_j \end{array}
\right) \left(\begin{array}{c} y_j \\ y_{j+1}  \end{array} \right) & j \in \{i \in \{1,\dots,n\} | \lambda_i=\lambda_{i+1}^c \notin \mathbb{R}\} \,,
\end{cases}
\end{equation*}
with the initial conditions $y_j(0)=s_j(\ve{x}_0)$ (for $\lambda_j\in\mathbb{R}$) and $(y_j(0),y_{j+1}(0))=(2\Re\{s_j(\ve{x}_0)\},2\Im\{s_j(\ve{x}_0)\})$ (for $\lambda_j=\lambda_{j+1}^c\notin\mathbb{R}$). Then, using the variables $r_j=y_j$ (for $\lambda_j\in\mathbb{R}$) and the polar coordinates $(y_j,y_{j+1})=(r_j \cos(\theta_j), r_j \sin(\theta_j))$ (for $\lambda_j=\lambda_{j+1}^c\notin\mathbb{R}$), we obtain the canonical equations
\begin{eqnarray}
\dot{r}_j & = & \sigma_j r_j \qquad j \in \{i \in \{1,\dots,n\} | \lambda_i \in \mathbb{R} \textrm{ or } \lambda_i=\lambda_{i+1}^c \notin \mathbb{R} \} \,, \label{dyn_r}\\
\dot{\theta}_j & = & \omega_j \qquad j \in \{ i \in \{1,\dots,n\} | \lambda_i=\lambda_{i+1}^c \notin \mathbb{R} \} \,. \label{dyn_theta}
\end{eqnarray}
The initial conditions are given by $r_j(0)=s_j(\ve{x}_0)$ (for $\lambda_j\in\mathbb{R}$) and $(r_j(0),\theta_j(0))=(2|s_j(\ve{x}_0)|,\angle s_j(\ve{x}_0))$ (for $\lambda_j=\lambda_{j+1}^c\notin\mathbb{R}$), where $\angle$ denotes the argument of a complex number.

According to \eqref{dyn_r}-\eqref{dyn_theta}, the variables $r_j$ and $\theta_j$ can be interpreted as the action-angle coordinates of the system (see \cite{Arnold}) and the convergence toward the fixed point is captured by the (action) variables $r_j$. Therefore, the isostables of \eqref{syst_lin} correspond to the isostables of the linear system \eqref{dyn_r} with the real eigenvalues $\sigma_j$. Since the highest eigenvalue is $\sigma_1$, the results of Section \ref{subsub_real} imply that the isostables are characterized by a constant value $|r_1|$, that is, they are the level sets of $|s_1(\ve{x})|$. Denoting $r_1=2|s_1(\ve{x})|\triangleq \exp(\sigma_1 \tau(\ve{x}))$ and using an expression similar to \eqref{isostable_real}, we obtain (in the variables $y_i$)
\begin{equation*}
\mathcal{I}_\tau=\left\{\ve{y}\in\mathbb{R}^n\Big|\ve{y}= (\cos(\theta) \ve{e}_1 + \sin(\theta) \ve{e}_2) e^{\sigma_1 \tau} + \sum_{j=3}^n \alpha_j \, \ve{e}_j\,, \, \forall \alpha_j\in\mathbb{R}, \, \forall\theta\in[0,2\pi)\right\}\,,
\end{equation*}
where $\ve{e}_j$ are the unit vectors of $\mathbb{R}^n$, or equivalently (in the variables $x_i$)
\begin{equation}
\label{isostable_complex}
\mathcal{I}_\tau=\left\{\ve{x}\in\mathbb{R}^n\Big|\ve{x}= (\cos(\theta) \ve{a} + \sin(\theta) \ve{b}) e^{\sigma_1 \tau} + \sum_{j=3}^n \alpha_j \, \ve{v}_j\,, \, \forall \alpha_j \in\mathbb{R}, \, \forall\theta\in[0,2\pi)\right\}\,,
\end{equation}
with $\ve{a}=\Re\{\ve{v}_1\}$ and $\ve{b}=-\Im\{\ve{v}_1\}$. In this case, the isostables are the $(n-1)$-dimensional cylindrical hypersurfaces parallel to $\ve{v}_j$ for all $j\geq 3$. The intersection of an isostable with the $2$-dimensional plane spanned by ($\ve{a},\ve{b}$) (i.e., the base of the cylinder) is an ellipse (Figure \ref{linear_complex}). Indeed, a linear transformation turns the circle in the variables $y_j$ into an ellipse in the variables $x_j$.

The trajectories starting from the same isostable converge to the fixed point along a spiral characterized by the vectors $(\ve{a},\ve{b})$, according to
\begin{equation*}
\phi(t,\ve{x}) \approx  \left(\ve{a} \cos(\omega_1 t +\theta(\ve{x}))+ \ve{b} \sin(\omega_1 t +\theta(\ve{x})) \right) e^{\sigma_1(t+\tau(\ve{x}))} \quad \textrm{as } t\rightarrow \infty\,,
\end{equation*}
with $\exp(\sigma_1 \tau(\ve{x}))=2|s_1(\ve{x})|$ and $\theta(\ve{x})=\angle s_1(\ve{x})$. Note that the phase---or angle coordinate--- $\theta$ is related to the notion of \emph{isochron} (see e.g. \cite{Gine,Sabatini} and Section \ref{sec_Lyap_lin}).

\begin{figure}[h]
\begin{center}
\subfigure[]{\includegraphics[height=5cm]{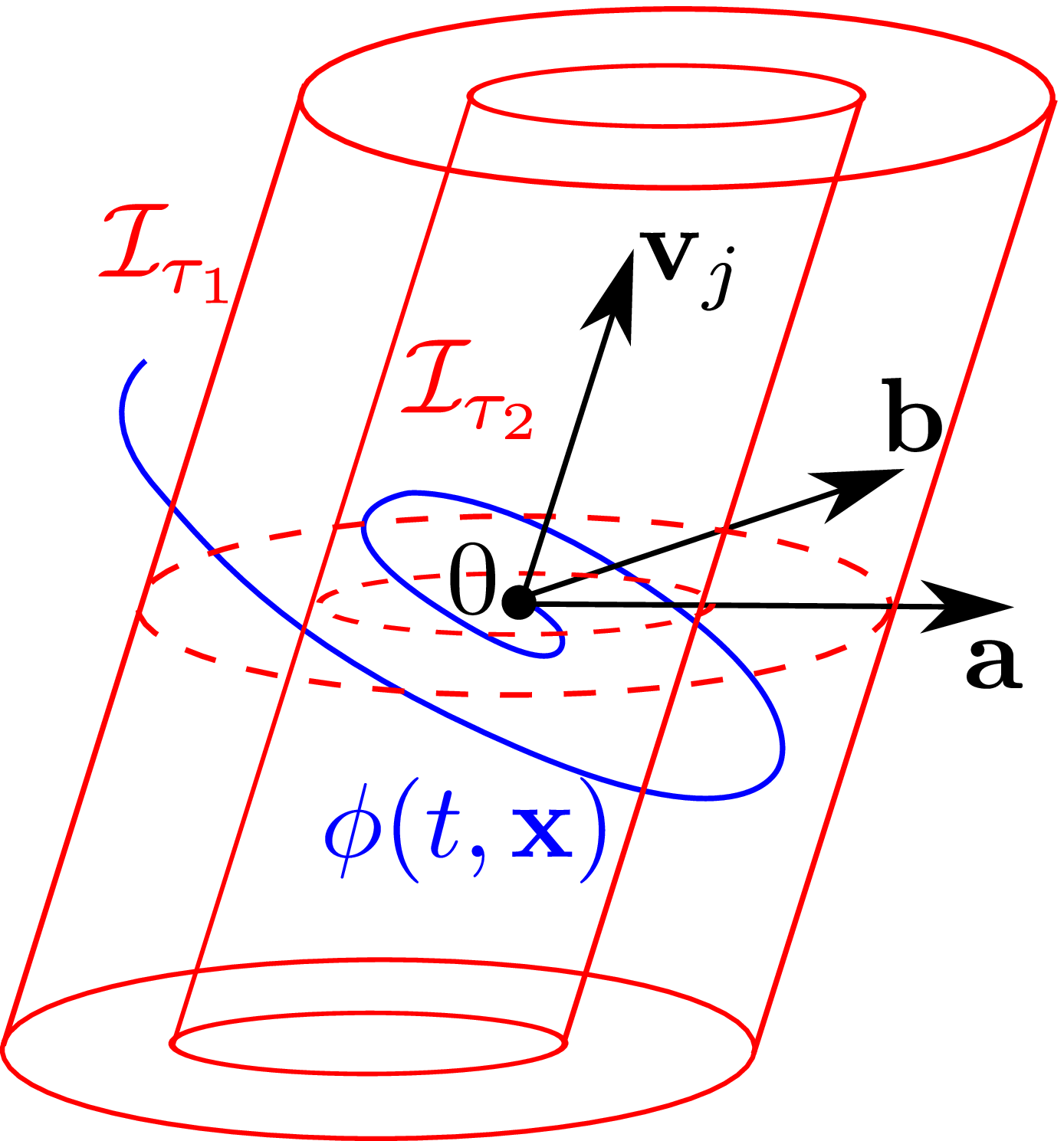}}
\hspace{1cm}
\subfigure[]{\includegraphics[height=5cm]{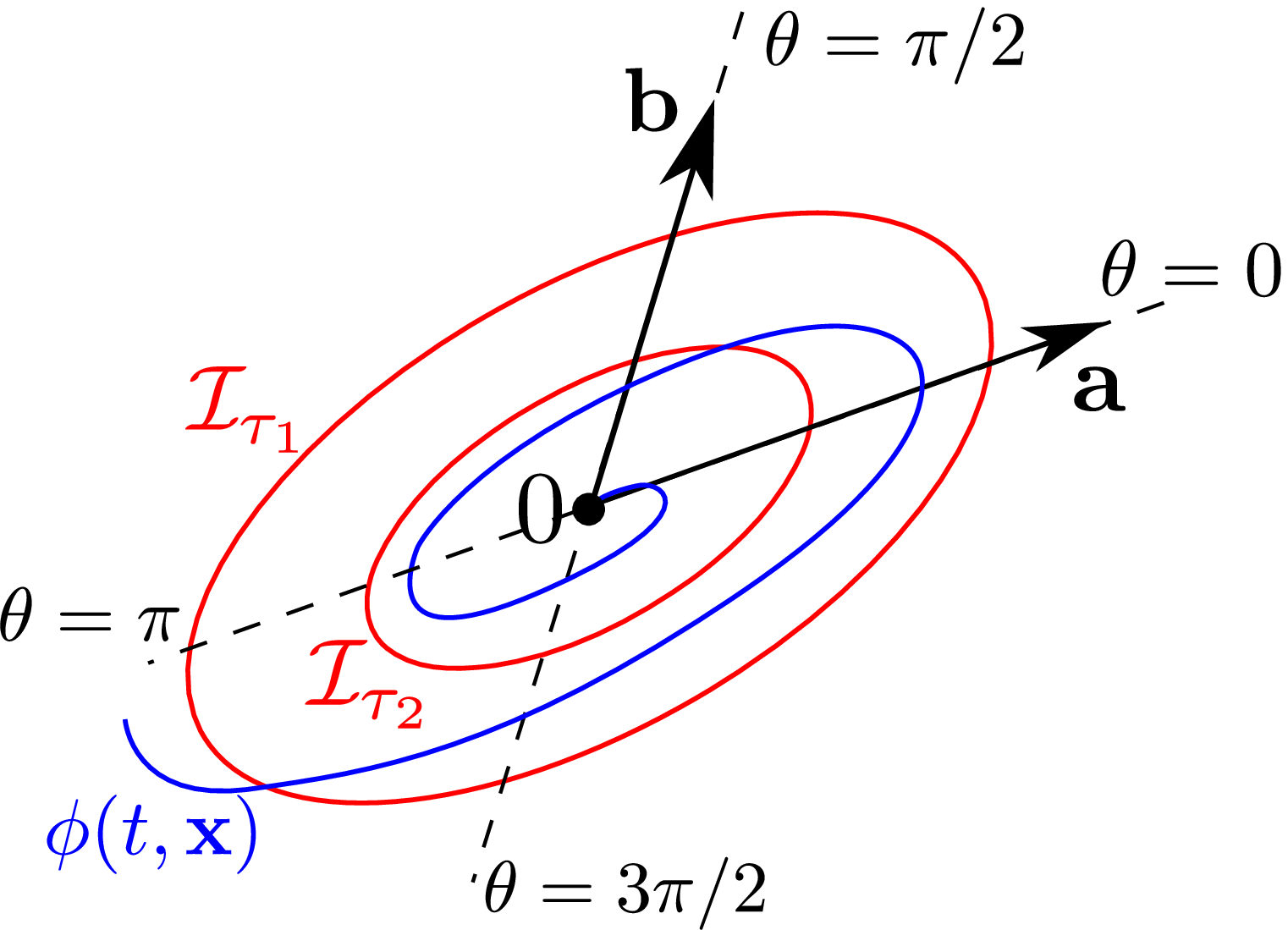}}
\caption{(a) The isostables of linear systems with a complex eigenvalue $\lambda_1$ are cylindrical hypersurfaces spanned by $\ve{v}_j$ for all $j\geq 3$. (b) For two-dimensional linear systems (or in the plane $\ve{a}-\ve{b}$), the isostables are ellipses with constant axes.}
\label{linear_complex}
\end{center}
\end{figure}

The expressions \eqref{isostable_real} and \eqref{isostable_complex} provide a unique definition of the isostables in the case of linear systems, when $\lambda_1$ is real and when $\lambda_1$ is complex, respectively. Since $\ve{v}_1^\pm=\ve{v}_1 \exp(i\theta)$ with $\theta=\{0,\pi\}$ and $\cos(\theta) \ve{a} + \sin(\theta) \ve{b}=\Re\{\ve{v}_1 \exp(i\theta)\}$, these two definitions can be summarized in a single definition.
\begin{definition}[Isostables of linear systems]
\label{def_iso_linear}
For the system \eqref{syst_lin}, the isostable $\mathcal{I}_{\tau}$ associated with the time $\tau$ is the $(n-1)$-dimensional manifold
\begin{equation*}
\mathcal{I}_{\tau} = \left \{\ve{x}\in\mathcal{B}(\ve{x}^*)\Big| \ve{x}=\Re \left\{\ve{v}_1 \, e^{i \theta} \right\} e^{\sigma_1 \tau} + \sum_{j=\overline{j}}^n \alpha_j \, \ve{v}_j\,, \, \forall \alpha_j \in\mathbb{R}, \, \forall\theta\in \Theta \right\}\,,
\end{equation*}
with $\Theta=\{0,\pi\}$ and $\overline{j}=2$ if $\lambda_1\in \mathbb{R}$, and $\Theta=[0,2\pi)$ and $\overline{j}=3$ if $\lambda_1\notin\mathbb{R}$.
\end{definition}

\subsection{Nonlinear systems}
\label{iso_nonlinear}

Now, we consider a nonlinear system
\begin{equation}
\label{syst_nonlin}
\dot{\ve{x}}=\ve{F}(\ve{x})\,, \quad \ve{x} \in \mathbb{R}^n
\end{equation}
where $\ve{F}$ is an analytic vector field, which admits a stable fixed point $\ve{x}^*$ with a basin of attraction $\mathcal{B}(\ve{x}^*)\subseteq \mathbb{R}^n$. In addition, we assume that the Jacobian matrix $\ve{J}$ computed at $\ve{x}^*$ has $n$ distinct (nonresonant) eigenvalues $\lambda_j=\sigma_j+i\omega_j$ characterized by strictly negative real parts $\sigma_j<0$ and sorted according to \eqref{hypo_eigen}. (For unstable fixed points or for multiple eigenvalues, see Remark \ref{unstable} and Remark \ref{multiple_eigenval}, respectively.) 

The isostables of linear systems have been defined as the level sets of the coefficient $s_1(\ve{x})$ that appears in the expression of the flow \eqref{sol_lin}. For nonlinear systems, an expression of the flow similar to \eqref{sol_lin} can be obtained through the framework of Koopman operator \cite{Mezic,MezicBana}. The Koopman semigroup of operators $U^t$ describes the evolution of a (vector-valued) \emph{observable} $\ve{f}:\mathbb{R}^n \mapsto \mathbb{C}^m$ along the trajectories of the system and is rigorously defined as the composition $U^t \ve{f}(\ve{x})=\ve{f} \circ \phi(t,\ve{x})$. Throughout the paper, we will make no assumption on the observables, except that they are analytic in the neighborhood of the fixed point. In the space of analytic observables, the operator has only a point spectrum and its spectral decomposition yields \cite{Mezic_ann_rev}
\begin{equation}
\label{evol_observable}
U^t \ve{f}(\ve{x}) = \sum_{\{k_1,\dots,k_n\}\in\mathbb{N}^n} s_1^{k_1}(\ve{x}) \cdots s_n^{k_n}(\ve{x}) \, \ve{\overline{v}}_{k_1\cdots k_n} \, e^{(k_1 \lambda_1+\cdots+k_n \lambda_n) t}\,.
\end{equation}
A detailed derivation of the decomposition in the case of a stable fixed point is given in Appendix \ref{appendix}. The functions $s_j(\ve{x})$, $j=1,\dots,n$, are the smooth eigenfunctions of $U^t$ associated with the eigenvalues $\lambda_j$, i.e.
\begin{equation}
\label{prop_eigenfunction}
U^t s_j(\ve{x})=s_j(\phi(t,\ve{x}))=s_j(\ve{x}) e^{\lambda_j t}\,,
\end{equation}
and the vectors $\ve{\overline{v}}_{k_1\cdots k_n}$ are the so-called Koopman modes \cite{Rowley}, i.e. the projections of the observable $\ve{f}$ onto $s_1^{k_1}(\ve{x}) \cdots s_n^{k_n}(\ve{x})$. For the particular observable $\ve{f}(\ve{x})=\ve{x}$, \eqref{evol_observable} corresponds to the expression of the flow and can be rewritten as
\begin{equation}
\label{sol_nonlin}
\phi(t,\ve{x})= U^t \ve{x} =\ve{x}^* + \sum_{j=1}^n s_j(\ve{x}) \ve{v}_j \, e^{\lambda_j t} + \sum_{\substack{\{k_1,\dots,k_n\}\in\mathbb{N}_0^n\\ k_1+\cdots+k_n>1}} s_1^{k_1}(\ve{x}) \cdots s_n^{k_n}(\ve{x}) \,\ve{v}_{k_1\cdots k_n} \, e^{(k_1 \lambda_1+\cdots+k_n \lambda_n) t}\,.
\end{equation}
The first part of the expansion is similar to the linear flow \eqref{sol_lin}. The eigenvalues $\lambda_j$ and the Koopman modes $\ve{v}_j$ are the eigenvalues and eigenvectors of $\ve{J}$, respectively. Although the eigenfunctions $s_j(\ve{x})$ are not computed as the inner products $\langle \ve{x},\tilde{\ve{v}}_j \rangle$ as in the linear case, they can be interpreted as the inner products $\langle \ve{z},\tilde{\ve{v}}_j \rangle$, where $\ve{z}$ is the initial condition of a virtual trajectory evolving according to the linearized dynamics $\dot{\ve{z}}=\ve{J}\ve{z}$ and characterized by the same asymptotic evolution as $\phi(t,\ve{x})$ \cite{Lan}. The other terms in \eqref{sol_nonlin} do not appear in the expression of the linear flow \eqref{sol_lin} and account for the transient behavior of the trajectories owing to the nonlinearity of the dynamics.

The isostables can be rigorously defined as the \emph{level sets of the absolute value of the eigenfunction} $|s_1(\ve{x})|$. Indeed, the asymptotic evolution of the flow \eqref{sol_nonlin} is dominated by the first mode associated to $\lambda_1$. Then, a same argument as in Section \ref{iso_linear} shows that the points $\ve{x}$ characterized by the same value $|s_1(\ve{x})|$ are the initial conditions of trajectories that converge synchronously to the fixed point, with the evolution
\begin{equation}
\label{asympt_behav}
\phi(t,\ve{x})\approx 
\begin{cases} \ve{x}^*+\ve{v}_1^\pm e^{\sigma_1(t+\tau(\ve{x}))}\,, \quad e^{\sigma_1 \tau(\ve{x})}=|s_1(\ve{x})|\,, & \lambda_1\in \mathbb{R}\,, \\
\ve{x}^*+ \Re \left\{\ve{v}_1 \, e^{i(\omega_1 t+\theta(\ve{x}))} \right\} e^{\sigma_1(t+\tau(\ve{x}))}\,, \quad e^{\sigma_1 \tau(\ve{x})}=2|s_1(\ve{x})|\,,\, \theta(\ve{x})=\angle s_1(\ve{x}) & \lambda_1\notin \mathbb{R}\,.
\end{cases}
\end{equation}

We are now in position to propose a general definition for the isostables of a fixed point, which is valid both for linear and nonlinear systems and which is reminiscent of the usual definition of \emph{isochrons} for limit cycles \cite{Guckenheimer_iso,Winfree2}.
\begin{definition}[Isostables]
\label{def_iso}
For the system \eqref{syst_nonlin}, the isostable $\mathcal{I}_{\tau}$ of the fixed point $\ve{x}^*$, associated with the time $\tau$, is the $(n-1)$-dimensional manifold
\begin{equation*}
\mathcal{I}_{\tau} = \left \{\ve{x}\in\mathcal{B}(\ve{x}^*)\Big|\exists \, \theta \in \Theta \textrm{ s.t. } \lim_{t\rightarrow\infty} e^{-\sigma_1 t} \left \|\phi(t,\ve{x})-\ve{x}^*- \Re \left\{\ve{v}_1 \, e^{i(\omega_1 t+\theta)} \right\} e^{\sigma_1(t+\tau)} \right \|=0 \right\}\,,
\end{equation*}
with $\Theta=\{0,\pi\}$ and $\omega_1=0$ if $\lambda_1\in\mathbb{R}$ and $\Theta=[0,2\pi)$ if $\lambda_1\notin \mathbb{R}$.
\end{definition}

The reader will easily verify that, for all $\ve{x}$ belonging to the same isostable, Definition \ref{def_iso} imposes the same value $|s_1(\ve{x})|$ in the decomposition of the flow \eqref{sol_nonlin} and the same asymptotic behavior \eqref{asympt_behav}. Note that without the multiplication by the increasing exponential $e^{-\sigma_1 t}$, one would have $\mathcal{I}_\tau=\mathcal{B}(\ve{x}^*)$ $\forall \tau$ since $\phi(t,\ve{x})-\ve{x}^*\rightarrow \ve{0}$ as $t\rightarrow \infty$ for all $\ve{x}\in \mathcal{B}(\ve{x}^*)$.

Except for the case of multiple eigenvalues, for which $\ve{v}_1$ might not be unique (see Remark \ref{multiple_eigenval}), the isostables are uniquely defined through Definition \ref{def_iso}. Uniqueness of the isostables also follows from the fact that the Koopman operator has a unique eigenfunction $s_1(\ve{x})$ which is continuously differentiable in the neighborhood of the fixed point. Since it is precisely this eigenfunction $s_1(\ve{x})$ that appears in \eqref{sol_nonlin}, the isostables are the only sets that are relevant to capture the asymptotic behavior of the trajectories.

\begin{remark}[Unstable fixed point]
\label{unstable}
Definition \ref{def_iso} is easily extended to unstable fixed points characterized by $\sigma_j>\sigma_1>0$ for all $j$. Indeed, the isostables are still given by Definition \ref{def_iso}, where the limit $t\rightarrow \infty$ is replaced by $t\rightarrow -\infty$, that is, one considers the flow $\phi(-t,\ve{x})$ induced by the (stable) backward-time system. In this case, the isostables are related to the unstable eigenfunction $s_1(\ve{x})$ of the Koopman operator.
\end{remark}
\begin{remark}[Multiple eigenvalues]
\label{multiple_eigenval}
When the eigenvalue $\lambda_1$ has a multiplicity $m>1$, the fixed point is either a star node ($m$ linearly independent eigenvectors) or a degenerate node ($m$ linearly dependent eigenvectors). In the case of a star node, Definition \ref{def_iso} is not unique since it depends on the direction of the eigenvector $\ve{v}_1$ (in other words, a $C^1$ eigenfunction of the Koopman operator corresponding to the eigenvalue $\lambda_1$ is not unique). Actually, $\ve{v}_1$ should be replaced in Definition \ref{def_iso} by any linear combination of $m$ orthonormal eigenvectors of $\lambda_1$, a situation where the isostables lying in the vicinity of the fixed point correspond to cylindrical hypersurfaces whose intersection with the hyperplane spanned by the eigenvectors of $\lambda_1$ is a hypersphere. In the case of a degenerate node, the asymptotic evolution toward the fixed point is dominated by the (slowest) term $s_1(\ve{x}) \ve{v}_1\, t^{m-1} \exp(\sigma_1 t)$. Then, the increasing exponential $\exp(-\sigma_1 t)$ in Definition \ref{def_iso} must be replaced by $t^{1-m} \exp(-\sigma_1 t)$.
\end{remark}

\subsection{Some remarks on the isostables}

\paragraph*{Equivalent definitions for excitable systems.}
In \cite{Rabinovitch}, the authors considered two-dimensional excitable systems characterized by a transient attractor (i.e. slow manifold) which attracts all the trajectories as they approach the fixed point. They defined the isostables (they actually used the term \guillemets{isochrons}, see Section \ref{subsec_isochrons}) as the sets of points that converge to the same trajectory on the transient attractor. This definition is equivalent to Definition \ref{def_iso} since both impose that trajectories on the same isostable have the same asymptotic behavior (see also Section \ref{sec_FN}). However, the definition of \cite{Rabinovitch} is qualitative since no trajectory effectively reaches the transient attractor (which may even lose its normal stability property near a fixed point with complex eigenvalues). Also, it is valid only if the system admits a transient attractor induced by the slow-fast dynamics. In contrast, Definition \ref{def_iso} is more general and does not rely on the existence of a transient attractor.

For systems with a slow (or center) manifold, the \guillemets{projection manifolds} studied in \cite{Roberts1,Roberts2} are related to the isostables. They are the sets of initial conditions for which the trajectories share the same long-term behavior on the slow manifold. In addition, they can be obtained through the normal form of the dynamics \cite{Roberts2}. If the slow manifold is one-dimensional and if $\lambda_1$ is real, the projection manifolds are identical to the isostables. Otherwise, they do not exactly correspond to the isostables since they are not related to the slowest direction $\ve{v}_1$ only and are not of codimension-$1$.

\paragraph*{Isostables and flow.}
The flow $\phi(\Delta t,\cdot)$ maps the isostable $\mathcal{I}_\tau$ to the isostable $\mathcal{I}_{\tau+\Delta t}$, for all $\Delta t \in \mathbb{R}$ (as explained in the beginning of Section \ref{sec_iso}). Indeed, if $\ve{x}\in\mathcal{I}_\tau$, Definition \ref{def_iso} implies that
\begin{equation*}
\lim_{t\rightarrow\infty} e^{-\sigma_1 t} \left \|\phi(t,\ve{x})-\ve{x}^*- \Re \left\{\ve{v}_1 \, e^{i(\omega_1 t+\theta)} \right\} e^{\sigma_1(t+\tau)} \right \|=0
\end{equation*}
for some $\theta\in\Theta$. Using the substitution $t=t'+\Delta t$, we have
\begin{equation*}
\lim_{t'\rightarrow\infty} e^{-\sigma_1 t'} \left \|\phi\left( t',\phi(\Delta t,\ve{x})\right) -\ve{x}^*- \Re \left\{\ve{v}_1 \, e^{i(\omega_1 t'+\theta')} \right\} e^{\sigma_1(t'+\tau+\Delta t)} \right \|=0\,,
\end{equation*}
with $\theta'=\theta+\omega_1 \Delta t\in\Theta$, so that $\phi(\Delta t,\ve{x})\in \mathcal{I}_{\tau+\Delta t}$.

\paragraph*{Local geometry near the fixed point.}
The isostables close to the fixed point have a geometry similar to the isostables of the linearized dynamics, i.e. parallel hyperplanes ($\lambda_1\in\mathbb{R}$) or cylindrical hypersurfaces with constant axes of the elliptical sections ($\lambda_1\notin\mathbb{R}$) (see Section \ref{iso_linear}). This follows from the fact that, in the vicinity of the fixed point, the flow \eqref{sol_nonlin} and the flow induced by the linearized dynamics are (approximately) equal, so that their eigenfunctions $s_1(\ve{x})$ have (approximately) the same value for $\|\ve{x}-\ve{x}^*\| \ll 1$ (see also \eqref{approx_eigen} in Appendix \ref{appendix}).

\paragraph*{Invariant fibration.}

When the eigenvalue $\lambda_1$ is real, the isostables are the invariant fibers of the $1$-dimensional invariant manifold $V$ defined as the trajectory associated with the slow direction $\ve{v}_1$ (i.e. the transient attractor in the case of slow-fast systems). Given their local geometry, it is clear that the isostables near the fixed point are the fibers defined by the splitting $N \oplus TV$, where $N=\textrm{span}\{\ve{v}_2,\dots,\ve{v}_n\}$ and $TV=\textrm{span}\{\ve{v}_1\}$. Moreover, it follows from the invariance property of the isostables that this local fibration is naturally extended to the whole invariant manifold $V$ by backward integration of the flow. Provided that $\sigma_2<\sigma_1$, the normal hyperbolicity of $V$ implies that the isostables are characterized by smoothness properties and persist under a small perturbation of the vector field \cite{Fenichel1,Hirsch}. In addition, this description also implies the uniqueness of the concept of isostables. Note that Definition \ref{def_iso} is recovered in \cite{Fenichel2}, Theorem 3, and corresponds to the property that the points on the same fiber converge to a trajectory on $V$ with the fastest rate.

When $\lambda_1$ is complex, however, the isostables cannot be interpreted as the invariant fibers of an invariant manifold. They are homeomorphic to a circle (or to a cylinder) and cannot be the sets of points converging to the same trajectory, since the flow is continuous. Moreover, in the neighborhood of the fixed point, one observes no particular one-dimensional invariant manifold (e.g. a slow manifold) that is tangent to the $\Re\{\ve{v}_1\}-\Im\{\ve{v}_1\}$ plane. In that case, the only definition of the isostables is in terms of an eigenfunction of the Koopman operator. 

\paragraph*{Extension to other eigenfunctions.}

The isostables $\mathcal{I}_\tau$ are related to the first eigenfunction $s_1(\ve{x})$ of the Koopman operator, but the concept can be directly generalized to other eigenfunctions. Namely, the sets $\mathcal{I}^{(j)}_{\tau^{(j)}}$, $j\in \mathcal{J}=\{i \in \{1,\dots,n\} | \lambda_{i} \in \mathbb{R} \textrm{ or } \lambda_i=\lambda_{i+1}^c \notin \mathbb{R} \}$, are obtained by considering the level sets of $|s_j(\ve{x})|$. The extension is useful to derive an action-angle coordinates representation of the system, to perform a global linearization of the dynamics (see Section \ref{subsec_linearization}), or to compute the (un)stable manifold of an attractor.

The intersection between the sets $\mathcal{I}^{(j)}_{\tau^{(j)}}$, with $j\leq \overline{j}$, is defined as the generalization of Definition \ref{def_iso}
\begin{equation}
\label{intersection}
\begin{split}
\bigcap_{\substack{j\in\mathcal{J} \\ j\leq \overline{j}}} \mathcal{I}^{(j)}_{\tau^{(j)}} =\Bigg \{ & \ve{x}\in\mathcal{B}(\ve{x}^*)\Big|\exists \, \theta_j \in \Theta_j \textrm{ s.t. } \\
& \lim_{t\rightarrow\infty} e^{-\sigma_{\overline{j}} t} \Bigg \|\phi(t,\ve{x})-\ve{x}^* - \sum_{\substack{j\in\mathcal{J} \\ j\leq \overline{j}}} \Re \left\{ \ve{v}_j \, e^{i(\omega_j t+\theta_j)} \right\} e^{\sigma_j(t+\tau^{(j)})} \Bigg \|=0 \Bigg \} \,,
\end{split}
\end{equation}
with $\Theta_j=\{0,\pi\}$ if $\lambda_j\in\mathbb{R}$ and $\Theta_j=[0,2\pi)$ if $\lambda_j\notin \mathbb{R}$. When $\tau^{(j)}=\infty$ for all $j<\overline{j} \in \mathcal{J}$, \eqref{intersection} is equivalent to Definition \ref{def_iso}, so that it can be interpreted as an isostable for the system restricted to the invariant manifold $M_{\overline{j}}=\bigcap_{j\in\mathcal{J},j< \overline{j}} \mathcal{I}^{(j)}_{\tau^{(j)}=\infty}$. (The manifold $M_{\overline{j}}$ is associated with the fast directions $\ve{v}_{j}$, $j=\overline{j},\dots,n$.) In addition, if $\lambda_{\overline{j}}\in \mathbb{R}$, \eqref{intersection} defines a codimension-$\overline{j}$ invariant fibration of the invariant manifold $V_{\overline{j}}=\bigcap_{j\in\mathcal{J},j > \overline{j}} \mathcal{I}^{(j)}_{\tau^{(j)}=\infty}$. (The manifold $V_{\overline{j}}$ is associated with the slow directions $\ve{v}_{j}$, $j=1,\dots,\overline{j}$.) If $V_{\overline{j}}$ is a slow manifold, then the fibration \eqref{intersection} corresponds to the projection manifolds considered in \cite{Roberts1, Roberts2}. Note that the family of manifolds $V_{\overline{j}}$ generalizes the notion of slow manifold observed for systems with slow-fast dynamics.

\section{Laplace averages}
\label{sec_Laplace}

In this section, we show that the isostables can be obtained through the computation of the so-called \emph{Laplace averages}. The Laplace averages of a scalar observable $f:\mathbb{R}^n\mapsto \mathbb{C}$ are given by
\begin{equation}
\label{Laplace_average}
f^*_{\lambda}(\ve{x})=\lim_{T\rightarrow \infty} \frac{1}{T} \int_0^T (f \circ \phi_t)(\ve{x})\, e^{-\lambda t} \, dt\,,
\end{equation}
with $\phi_t(\ve{x})=\phi(t,\ve{x})$ and $\lambda\in\mathbb{C}$. (The observable $f$ has to satisfy some conditions which ensure that the averages exist.) When it exists and is nonzero for some $\lambda$ and $f$, the Laplace average $f^*_{\lambda}(\ve{x})$ corresponds to the eigenfunction of the Koopman operator associated with the eigenvalue $\lambda$ \cite{Mezic_ann_rev}. Indeed, one easily verifies that
\begin{equation*}
\begin{split}
U^{t'} f^*_{\lambda}(\ve{x}) & = \lim_{T\rightarrow \infty} \frac{1}{T} \int_0^T (f \circ \phi_{t+t'})(\ve{x}) \, e^{-\lambda t} \, dt\\
& = e^{\lambda t'} \, \lim_{T\rightarrow \infty} \frac{1}{T} \int_{t'}^{T+t'} (f \circ \phi_{t})(\ve{x})\, e^{-\lambda t} \, dt \\
& = e^{\lambda t'} \, f^*_{\lambda}(\ve{x})
\end{split}
\end{equation*}
where the second equality is obtained by substitution. For systems with a stable fixed point, the Laplace average $f^*_{\lambda_1}(\ve{x})$ corresponds (up to a scalar factor) to the eigenfunction $s_1(\ve{x})$, and is therefore related to the concept of isostable. In addition, the Laplace averages are an extension of the Fourier averages \cite{Mezic,MezicBana} that were used in \cite{Mauroy_Mezic} to compute the isochrons of limit cycles.

\begin{remark}
\label{gen_Laplace_av}
Instead of \eqref{Laplace_average}, the generalized Laplace averages \cite{Mezic_ann_rev}
\begin{equation*}
f^*_{\lambda_j}(\ve{x})=\lim_{T\rightarrow \infty} \frac{1}{T} \int_0^T \left( (f \circ \phi_t)(\ve{x})-f(\ve{x}^*)-\sum_{k=1}^{j-1} f^*_{\lambda_k}(\ve{x}) e^{\lambda_k t} \right)\, e^{-\lambda_j t} \, dt
\end{equation*}
must be considered to obtain other eigenfunctions $s_j(\ve{x})$, $j\geq 2$, and the associated sets $\mathcal{I}^{(j)}_{\tau^{(j)}}$ considered in \eqref{intersection}. However, their computation is delicate since it requires a very accurate computation of the other (generalized) Laplace averages $f^*_{\lambda_k}(\ve{x})$, $k<j$, and goes beyond the scope of the present paper.
\end{remark}

\subsection{The main result}

The exact connection between the Laplace averages and the isostables is given in the following proposition.
\begin{proposition}
\label{prop_iso_average}
Consider an observable $f\in C^1$ such that $f(\ve{x}^*)=0$ and $\left\langle \nabla f(\ve{x}^*), \ve{v}_1\right\rangle \neq 0$. Then, a unique level set of the Laplace average $|f^*_{\lambda_1}|$ corresponds to a unique isostable. That is, $|f^*_{\lambda_1}(\ve{x})|=|f^*_{\lambda_1}(\ve{x}')|$, with $\ve{x}\in \mathcal{I}_\tau$ and $\ve{x}'\in \mathcal{I}_{\tau'}$, if and only if $\tau=\tau'$. In addition,
\begin{equation*}
\tau-\tau'=\frac{1}{\sigma_1} \ln \left|\frac{f^*_{\lambda_1}(\ve{x})}{f^*_{\lambda_1}(\ve{x}')}\right|\,.
\end{equation*}
\end{proposition}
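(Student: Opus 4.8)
The plan is to show that the Laplace average $f^*_{\lambda_1}$ is, up to a nonzero multiplicative constant, exactly the Koopman eigenfunction $s_1$, and then to read off the statement from the fact (established in Section~\ref{iso_nonlinear}) that the isostables are precisely the level sets of $|s_1|$. So the core of the argument is the identity $f^*_{\lambda_1}(\ve{x}) = c\, s_1(\ve{x})$ with $c = \langle \nabla f(\ve{x}^*), \ve{v}_1\rangle \neq 0$ (or some scalar proportional to it), valid on the whole basin $\mathcal{B}(\ve{x}^*)$.

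\textbf{Step 1: Expand the observable along the flow.} Since $f\in C^1$ with $f(\ve{x}^*)=0$, and the flow admits the Koopman expansion \eqref{sol_nonlin}, I would substitute $\phi(t,\ve{x})$ into $f$ and expand. The cleanest route is to note that $f\circ\phi_t$ is itself an observable whose Koopman expansion has the form $f(\phi(t,\ve{x})) = \sum_{\{k_1,\dots,k_n\}} s_1^{k_1}(\ve{x})\cdots s_n^{k_n}(\ve{x})\, \bar w_{k_1\cdots k_n}\, e^{(k_1\lambda_1+\cdots+k_n\lambda_n)t}$ as in \eqref{evol_observable} (scalar case), where the constant term is $\bar w_{0\cdots 0} = f(\ve{x}^*) = 0$ and the coefficient of the pure $e^{\lambda_1 t}$ term is $\bar w_{1 0\cdots 0} = \langle \nabla f(\ve{x}^*), \ve{v}_1\rangle\, s_1(\ve{x})$. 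This last identification comes from differentiating $f\circ\phi_t$ at $\ve{x}^*$: the linear-in-$s_j$ part of the expansion of $f$ near $\ve{x}^*$ is $\sum_j \langle \nabla f(\ve{x}^*),\ve{v}_j\rangle s_j(\ve{x})$, because $s_j$ restricted to the linearization is $\langle \ve{x}-\ve{x}^*, \tilde{\ve{v}}_j\rangle$ and $\ve{x}-\ve{x}^* \approx \sum_j s_j(\ve{x})\ve{v}_j$ near the fixed point.

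\textbf{Step 2: Extract the $\lambda_1$ term via the Laplace average.} Plug the expansion from Step~1 into \eqref{Laplace_average}. Each exponential $e^{\mu t}$ with $\mu = k_1\lambda_1+\cdots+k_n\lambda_n$ contributes $\frac{1}{T}\int_0^T e^{(\mu-\lambda_1)t}\,dt$, which tends to $0$ as $T\to\infty$ whenever $\Re(\mu-\lambda_1) < 0$, i.e. $\Re(\mu) < \sigma_1$, and the $k=0$ (constant) term vanishes because $f(\ve{x}^*)=0$. The only surviving term is $k_1=1$, $k_j=0$ for $j\ge 2$, which is resonant ($\mu=\lambda_1$) and gives $\frac{1}{T}\int_0^T 1\,dt = 1$. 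Hence $f^*_{\lambda_1}(\ve{x}) = \langle \nabla f(\ve{x}^*), \ve{v}_1\rangle\, s_1(\ve{x})$. (One should check there is no other multi-index with $\Re(\mu)=\sigma_1$ beyond $k_1=1$; under the ordering \eqref{hypo_eigen} and the non-resonance assumption this is the case, since any such $\mu$ would be a nonnegative integer combination of eigenvalues with real part $\ge$ that of $\lambda_1$, forcing $k_1\le 1$ and the others $0$, and $k_1=0$ gives $\mu=0\ne\lambda_1$.)

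\textbf{Step 3: Conclude.} Since $c \triangleq \langle\nabla f(\ve{x}^*),\ve{v}_1\rangle \ne 0$ by hypothesis, $|f^*_{\lambda_1}(\ve{x})| = |c|\,|s_1(\ve{x})|$, so level sets of $|f^*_{\lambda_1}|$ coincide with level sets of $|s_1|$, which by Definition~\ref{def_iso} (and the discussion around \eqref{asympt_behav}) are exactly the isostables; thus $|f^*_{\lambda_1}(\ve{x})|=|f^*_{\lambda_1}(\ve{x}')|$ iff $\ve{x},\ve{x}'$ lie on the same isostable, i.e. $\tau=\tau'$. For the quantitative formula, recall $|s_1(\ve{x})| = \kappa\, e^{\sigma_1\tau(\ve{x})}$ (with $\kappa=1$ if $\lambda_1\in\mathbb{R}$, $\kappa=1/2$ if complex), so $|f^*_{\lambda_1}(\ve{x})| = |c|\kappa\, e^{\sigma_1\tau}$; taking the ratio at $\ve{x}$ and $\ve{x}'$ kills $|c|\kappa$ and gives $\ln|f^*_{\lambda_1}(\ve{x})/f^*_{\lambda_1}(\ve{x}')| = \sigma_1(\tau-\tau')$, which rearranges to the claimed identity.

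\textbf{Main obstacle.} The delicate points are analytic rather than algebraic: (i) justifying that the Koopman expansion \eqref{evol_observable} applied to $f\circ\phi_t$ converges well enough (and uniformly enough on the relevant time interval, or with suitable tail control) to interchange the sum with the limit $T\to\infty$ in \eqref{Laplace_average} --- this is where the unstated ``conditions on $f$ which ensure the averages exist'' do their work, and one likely needs $f$ analytic near $\ve{x}^*$, or at least that the trajectory enters a neighborhood where the expansion is valid and the $C^1$ bound on $f$ controls the rest; (ii) handling the part of the trajectory before it enters that neighborhood, where one uses that this is a finite time window so its contribution to $\frac{1}{T}\int_0^T$ vanishes as $T\to\infty$. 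I would isolate (i) as a lemma (or lean on the Appendix~\ref{appendix} derivation) and treat (ii) by splitting the integral at a fixed time $t_0$.
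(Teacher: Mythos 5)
Your route is genuinely different from the paper's and, at the level of the algebra, lands on the same formulas: your identity $f^*_{\lambda_1}(\ve{x})=\langle\nabla f(\ve{x}^*),\ve{v}_1\rangle\,s_1(\ve{x})$ is exactly what \eqref{abs_average_real}--\eqref{abs_average_complex} express once one recalls $|s_1|=e^{\sigma_1\tau}$ (resp.\ $e^{\sigma_1\tau}/2$), and your Step 3 is the paper's closing argument verbatim. The difference is in how that identity is obtained. You expand $f\circ\phi_t$ in the full Koopman mode series \eqref{evol_observable} and let the Laplace average annihilate every multi-index except $k_1=1$; the paper never expands $f$ at all. Instead it starts from Definition \ref{def_iso}: for $\ve{x}\in\mathcal{I}_\tau$ the flow is asymptotic to $\ve{x}^*+\Re\{\ve{v}_1 e^{i(\omega_1 t+\theta)}\}e^{\sigma_1(t+\tau)}$ up to $o(e^{\sigma_1 t})$, so a first-order Taylor expansion of the $C^1$ observable gives $(f\circ\phi_t)(\ve{x})=\langle\nabla f(\ve{x}^*),\Re\{\ve{v}_1 e^{i(\omega_1 t+\theta)}\}\rangle e^{\sigma_1(t+\tau)}+o(e^{\sigma_1 t})$ (this is \eqref{inequa1}), and the Laplace average of this explicit leading term is then computed directly, the oscillatory piece averaging to zero when $\lambda_1\notin\mathbb{R}$ (the second term of \eqref{average_gen}). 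The paper's route works under the stated hypothesis $f\in C^1$; yours buys a cleaner conceptual statement ($f^*_{\lambda_1}$ is the projection of $f$ onto $s_1$) at the price of stronger regularity.

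That price is the one genuine gap, which you partly flag yourself: Step 1 is not available for a merely $C^1$ observable. The expansion \eqref{evol_observable} is derived (Appendix \ref{appendix}) for observables analytic near $\ve{x}^*$; a $C^1$ bound does not let you write $f\circ\phi_t$ as a convergent series in the $s_j$, nor justify interchanging that series with $\lim_{T\to\infty}\frac{1}{T}\int_0^T$. The repair you sketch is, in effect, the paper's proof: replace the expansion of $f$ by its first-order Taylor polynomial plus an $o(\|\phi_t(\ve{x})-\ve{x}^*\|)$ remainder, and control the remainder via the expansion \eqref{sol_nonlin} of the \emph{flow} (which is available because $\ve{F}$ is analytic) together with the defining limit of the isostable. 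Two smaller imprecisions: the condition $\Re(\mu)\ge\sigma_1$ with all $\sigma_j\le\sigma_1<0$ forces $k_1+\cdots+k_n\le 1$, not merely $k_1\le 1$; and when some $\sigma_j=\sigma_1$ with $\omega_j\ne\omega_1$ (e.g.\ $\lambda_2=\lambda_1^c$) the corresponding term is \emph{not} killed by the decay estimate but only by the oscillatory average $\frac{1}{T}\int_0^T e^{i(\omega_j-\omega_1)t}\,dt\to0$, so your case analysis should be stated in those terms.
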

\begin{proof}
If $\ve{x}$ belongs to the isostable $\mathcal{I}_\tau$, one has, for some $\theta \in \Theta$,
\begin{eqnarray}
&& \lim_{t \rightarrow \infty} e^{-\sigma_1 t} \left| (f\circ \phi_t)(\ve{x}) - f(\ve{x}^*)-\left\langle \nabla f(\ve{x}^*), \Re \left\{\ve{v}_1 \, e^{i(\omega_1 t+\theta)} \right\} \right\rangle e^{\sigma_1(t+\tau)} \right| \nonumber \\
&& \quad =\lim_{t \rightarrow \infty} e^{-\sigma_1 t} \left|\left\langle \nabla f(\ve{x}^*),\phi_t(\ve{x}) -\ve{x}^*\right\rangle +o(\|\phi_t(\ve{x}) -\ve{x}^*\|) -\left\langle \nabla f(\ve{x}^*), \Re \left\{\ve{v}_1 \, e^{i(\omega_1 t+\theta)} \right\} \right\rangle e^{\sigma_1(t+\tau)} \right| \nonumber \\
&& \quad \leq \|\nabla f(\ve{x}^*)\| \lim_{t \rightarrow \infty} e^{-\sigma_1 t} \left\|\phi_t(\ve{x})-\left(\ve{x}^*+\Re \left\{\ve{v}_1 \, e^{i(\omega_1 t+\theta)} \right\}\,e^{\sigma_1 (t+\tau)}\right)\right\| \nonumber \\
&& \quad \qquad +\lim_{t \rightarrow \infty} e^{-\sigma_1 t} o \Big(\Big\| \sum_{j=1}^n s_j(\ve{x}) \ve{v}_j \, e^{\lambda_j t} + \sum_{\substack{\{k_1,\cdots,k_n\}\in\mathbb{N}_0^n\\ k_1+\cdots+k_n>1}} s_1^{k_1}(\ve{x}) \cdots s_n^{k_n}(\ve{x}) \,\ve{v}_{k_1\cdots k_n} \, e^{(k_1 \lambda_1+\cdots+k_n \lambda_n) t} \Big\|\Big) \nonumber \\
&& \quad =0 \label{inequa1}
\end{eqnarray}
with $\lambda_1=\sigma_1+i \omega_1$. The first equality is obtained through a first-order Taylor approximation, the inequality results from the Cauchy–Schwarz inequality and the expression of the flow \eqref{sol_nonlin}, and the last equality is implied by Definition \ref{def_iso}. Then, it follows from \eqref{inequa1} that
\begin{align*}
\begin{split}
&\left|\lim_{T\rightarrow \infty} \frac{1}{T} \int_0^T (f \circ \phi_t)(\ve{x})\, e^{-\lambda_1 t} \,dt - \lim_{t\rightarrow \infty} \frac{1}{T} \int_0^T \left(f(\ve{x}^*)+\left\langle \nabla f(\ve{x}^*), \Re \left\{\ve{v}_1 \, e^{i(\omega_1 t+\theta)} \right\} \right\rangle e^{\sigma_1(t+\tau)} \right) \, e^{-\lambda_1 t} \,dt \right|\\
& \leq \lim_{T \rightarrow \infty} \frac{1}{T} \int_0^T e^{-\sigma_1 t}  \left| (f\circ \phi_t)(\ve{x}) - f(\ve{x}^*)- \left\langle \nabla f(\ve{x}^*), \Re \left\{\ve{v}_1 \, e^{i(\omega_1 t+\theta)} \right\} \right\rangle e^{\sigma_1(t+\tau)} \right| dt=0 \,,
\end{split}
\end{align*}
or equivalently, given \eqref{Laplace_average} and since $f(\ve{x}^*)=0$,
\begin{eqnarray}
f^*_{\lambda_1}(\ve{x}) && =\lim_{T\rightarrow \infty} \frac{1}{T} \int_0^T \left(f(\ve{x}^*)+\left\langle\nabla f(\ve{x}^*) , \Re \left\{\ve{v}_1 \, e^{i(\omega_1 t+\theta)} \right\} \right\rangle e^{\sigma_1(t+\tau)} \right) \, e^{-\lambda_1 t} \, dt \nonumber\\
&& =\lim_{T\rightarrow \infty} \frac{1}{T} \int_0^T \left\langle \nabla f(\ve{x}^*) , \frac{\ve{v}_1 \, e^{i(\omega_1 t+\theta)}+ \ve{v}^c_1 \, e^{-i(\omega_1 t+\theta)}}{2} \right\rangle e^{\sigma_1 \tau -i\omega_1 t} \, dt \nonumber \\
&& =\lim_{T\rightarrow \infty} \frac{1}{2T} \left(\int_0^T \left\langle \nabla f(\ve{x}^*) , \ve{v}_1 \right\rangle \, e^{\sigma_1 \tau+ i\theta} dt + \int_0^T \left\langle \nabla f(\ve{x}^*) ,\ve{v}^c_1 \right\rangle \, e^{\sigma_1 \tau -i(2\omega_1 t+\theta)} dt \, \right). \label{average_gen}
\end{eqnarray}
If $\lambda_1\in \mathbb{R}$, one has $\omega_1=0$, $\ve{v}_1=\ve{v}_1^c$, and $e^{i\theta}=e^{-i\theta}$ (since $\theta \in \Theta=\{0,\pi\}$). Then, it follows from \eqref{average_gen} that
\begin{equation}
\label{real_average_value}
f^*_{\lambda_1}(\ve{x})=\lim_{T\rightarrow \infty} \frac{1}{T} \int_0^T \left\langle \nabla f(\ve{x}^*) , \ve{v}_1  \right\rangle \, e^{\sigma_1 \tau+ i\theta} dt= \left\langle \nabla f(\ve{x}^*) , \ve{v}_1 \right\rangle \, e^{\sigma_1 \tau+ i\theta}
\end{equation}
and 
\begin{equation}
\label{abs_average_real}
|f^*_{\lambda_1}(\ve{x})|=|\left\langle \nabla f(\ve{x}^*) , \ve{v}_1 \right\rangle| \, e^{\sigma_1 \tau} \,, \quad \lambda_1\in\mathbb{R}\,.
\end{equation}
If $\lambda_1 \notin \mathbb{R}$, $\omega_1\neq 0$ implies that the second term of \eqref{average_gen} is equal to zero, which yields
\begin{equation}
\label{abs_average_complex}
|f^*_{\lambda_1}(\ve{x})|=\frac{|\left\langle \nabla f(\ve{x}^*) , \ve{v}_1 \right\rangle|}{2} \, e^{\sigma_1 \tau} \,, \quad \lambda_1\notin\mathbb{R} \,.
\end{equation}
For $\ve{x}'\in \mathcal{I}_{\tau'}$, the inequalities \eqref{abs_average_real} or \eqref{abs_average_complex} still hold (with $\tau$ replaced by $\tau'$), so that the result follows provided that $\left\langle \nabla f(\ve{x}^*), \ve{v}_1 \right\rangle \neq 0$.
\end{proof}

The Laplace average $f_{\lambda_1}^*(\ve{x})$ considered in Proposition \ref{prop_iso_average} actually extracts the term $\overline{v}_{10\cdots0} \, s_1(\ve{x})$ from the expression of $U^t f(\ve{x})$ \eqref{evol_observable}. The Koopman mode $\overline{v}_{10\cdots0}$ corresponds to $\left\langle \nabla f(\ve{x}^*), \ve{v}_1 \right\rangle$, as shown by \eqref{abs_average_real} and \eqref{abs_average_complex} (recall that $s_1=\exp(\sigma_1 \tau)$ when $\lambda_1 \in \mathbb{R}$ or $s_1=\exp(\sigma_1 \tau)/2$ when $\lambda_1 \notin \mathbb{R}$). This value must be nonzero to ensure that $f$ has a nonzero projection onto $s_1$.

\begin{remark}[Unstable fixed point and multiple eigenvalues (see also Remarks \ref{unstable} and \ref{multiple_eigenval})]
(i) For unstable fixed points with $\sigma_j>\sigma_1>0$ for all $j$, the isostables are the level sets of the Laplace averages $|f^*_{-\lambda_1}|$ computed for backward-in-time trajectories $\phi(-t,\cdot)$.\\
(ii) In the case of a star node (e.g. with a real eigenvalue of multiplicity $m$), the isostables obtained through the Laplace averages depend on the choice of the observable $f$, which can have a nonzero projection $\left\langle \nabla f(\ve{x}^*) , \ve{v}_j \right\rangle$, $j=1,\dots,m$, on several eigenfunctions of the Koopman operator associated with the eigenvalue $\lambda_1$. However, a unique family of isostables is obtained by considering the level sets of $\sqrt{\sum_{k=1}^m (f^*_{\lambda_1,k})^2}$, where $f^*_{\lambda_1,k}$ denotes the Laplace average for an observable $f_k$ that satisfies $\left\langle \nabla f_k(\ve{x}^*) , \ve{v}_j \right\rangle=0$ for all $j\in\{1,\dots, m\}\setminus \{k\}$.\\
(iii) In the case of a degenerate fixed point (eigenvalue of multiplicity $m$), the isostables are computed with the Laplace averages, but the exponential $\exp(-\lambda_1 t)$ in \eqref{Laplace_average} must be replaced by $t^{1-m} \exp(-\lambda_1 t)$.
\end{remark}

\subsection{Numerical computation of the Laplace averages}
\label{sec_num_comput}

Proposition \ref{prop_iso_average} shows the strong connection between the isostables and the Laplace averages, a result which provides a straightforward method for computing the isostables. Similarly to the method developed in \cite{Mauroy_Mezic}, the computation of isostables is realized in two steps: (i) the Laplace averages are computed (over a finite time horizon) for a set of sample points (distributed on a regular grid or randomly); (ii) the level sets of the Laplace averages (i.e. the isostables) are obtained using interpolation techniques. The proposed method is flexible and well-suited to the use of adaptive grids, for instance. In addition, the averages can be computed either in the whole basin of attraction of the fixed point or only in regions of interest.

It is important to note that the computation of the Laplace averages involves the multiplication of the very small quantity $(f\circ \phi_t)(\ve{x})$ with the very large quantity $\exp(-\lambda_1 t)$, as $t\rightarrow \infty$. When the trajectory approaches the fixed point, the relative error of the integration method implies that the (numerically computed) quantity $(f\circ \phi_t)(\ve{x})$ does not compensate exactly the value $\exp(-\lambda_1 t)$, and the computation becomes numerically unstable. Therefore, a high accuracy of the numerical integration scheme and a reasonably small time horizon $T$ are required for the computation of the Laplace averages.

In spite of the numerical issue mentioned above, an algorithm based on a straightforward calculation of the Laplace averages produces good results. However, it is improved if one can avoid computing the integral. Toward this end, we remark that evaluating the integral \eqref{Laplace_average} is not necessary when $\lambda_1$ is real, since the integrand converges to a constant value. When $\lambda_1$ is complex, we consider the successive iterations of the discrete time-$T_1$ map $\phi(T_1,\cdot)$, with $T_1=2\pi/\omega_1$. The result is summarized as follows.
\begin{proposition}
\label{prop_limit}
(i) Real eigenvalue $\lambda_1$. Consider an observable $f\in C^1$ that satisfies $f(\ve{x}^*)=0$. Then, the Laplace average $f^*_{\lambda_1}(\ve{x})$ corresponds to the limit
\begin{equation}
\label{lim_f_average}
f^*_{\lambda_1}(\ve{x})=\lim_{t \rightarrow \infty} e^{-\sigma_1 t} (f\circ \phi_t)(\ve{x})\,.
\end{equation}
(ii) Complex eigenvalue $\lambda_1$. Consider two observables $f_1 \in C^1$ and $f_2 \in C^1$ that satisfy
\begin{eqnarray*}
&& f_1(\ve{x}^*)=f_2(\ve{x}^*)=0 \\
&& \left|\left \langle \nabla f_1(\ve{x}^*),\ve{a} \right \rangle \right| = \left| \left \langle \nabla f_2(\ve{x}^*),\ve{b} \right \rangle \right| \neq 0\\
&&\left \langle \nabla f_1(\ve{x}^*),\ve{b} \right \rangle = \left \langle \nabla f_2(\ve{x}^*),\ve{a} \right \rangle =0
\end{eqnarray*}
with $\ve{a}=\Re\{\ve{v}_1\}$ and $\ve{b}=-\Im\{\ve{v}_1\}$. Then the  Laplace average $|f^*_{\lambda_1}(\ve{x})|$ of an observable $f\in C^1$ is proportional to the limit
\begin{equation*}
|f^*_{\lambda_1}(\ve{x})| \propto \lim_{\substack{n \rightarrow \infty\\ n\in\mathbb{N}}} e^{-\sigma_1 n T_1}\sqrt{\left( (f_1\circ \phi_{nT_1})(\ve{x})\right)^2+\left((f_2\circ \phi_{nT_1})(\ve{x})\right)^2}\,,
\end{equation*}
with $T_1=2\pi/\omega_1$.
\end{proposition}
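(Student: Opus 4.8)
\emph{Proof proposal.} The plan is to obtain both parts as immediate corollaries of the asymptotic estimate that is already established inside the proof of Proposition~\ref{prop_iso_average}. Indeed, that computation shows, for \emph{any} observable $g\in C^1$ with $g(\ve{x}^*)=0$ and for \emph{any} $\ve{x}$ in the basin of attraction (every $\ve{x}$ lies on a unique isostable $\mathcal{I}_\tau$, which fixes $\tau=\tau(\ve{x})$ and an associated $\theta\in\Theta$), that
\[
\lim_{t\rightarrow\infty} e^{-\sigma_1 t}\left| (g\circ\phi_t)(\ve{x}) - \left\langle \nabla g(\ve{x}^*),\, \Re\{\ve{v}_1\, e^{i(\omega_1 t+\theta)}\}\right\rangle e^{\sigma_1(t+\tau)}\right| = 0 .
\]
This is precisely the chain of inequalities \eqref{inequa1} (first-order Taylor expansion of $g$, the Cauchy--Schwarz inequality, the flow decomposition \eqref{sol_nonlin}, and Definition~\ref{def_iso}; the Taylor remainder is $o(\|\phi_t(\ve{x})-\ve{x}^*\|)=o(e^{\sigma_1 t})$ because Definition~\ref{def_iso} forces $\|\phi_t(\ve{x})-\ve{x}^*\|=O(e^{\sigma_1 t})$). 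Hence $e^{-\sigma_1 t}(g\circ\phi_t)(\ve{x})$ and $\langle\nabla g(\ve{x}^*),\Re\{\ve{v}_1 e^{i(\omega_1 t+\theta)}\}\rangle\, e^{\sigma_1\tau}$ share the same asymptotics, and everything below is bookkeeping on top of this fact.

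For part~(i) I would set $\omega_1=0$ and $\Theta=\{0,\pi\}$, so that $\Re\{\ve{v}_1 e^{i\theta}\}=\pm\ve{v}_1=\ve{v}_1^{\pm}$; the estimate above then gives $\lim_{t\to\infty} e^{-\sigma_1 t}(f\circ\phi_t)(\ve{x}) = \langle\nabla f(\ve{x}^*),\ve{v}_1^{\pm}\rangle\, e^{\sigma_1\tau}$. On the other hand \eqref{real_average_value} reads $f^*_{\lambda_1}(\ve{x}) = \langle\nabla f(\ve{x}^*),\ve{v}_1\rangle\, e^{\sigma_1\tau+i\theta}$, which, since $e^{i\theta}=\pm1$, is the same number; this is \eqref{lim_f_average}. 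Note that the nondegeneracy hypothesis $\langle\nabla f(\ve{x}^*),\ve{v}_1\rangle\neq 0$ of Proposition~\ref{prop_iso_average} is not needed here: when that projection vanishes, both sides are $0$.

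For part~(ii) we have $\omega_1\neq 0$ and $T_1=2\pi/\omega_1$, hence $e^{i\omega_1 nT_1}=1$ for every $n\in\mathbb{N}$ and therefore $\Re\{\ve{v}_1 e^{i(\omega_1 nT_1+\theta)}\}=\ve{a}\cos\theta+\ve{b}\sin\theta$ with $\ve{a}=\Re\{\ve{v}_1\}$ and $\ve{b}=-\Im\{\ve{v}_1\}$. Applying the estimate of the first paragraph to $g=f_1$ and then $g=f_2$ and restricting $t$ to the subsequence $t=nT_1\to\infty$ (a limit over $t\to\infty$ restricts to any such subsequence), the gradient conditions $\langle\nabla f_1(\ve{x}^*),\ve{b}\rangle=\langle\nabla f_2(\ve{x}^*),\ve{a}\rangle=0$ eliminate the $\sin\theta$ term for $f_1$ and the $\cos\theta$ term for $f_2$, leaving $e^{-\sigma_1 nT_1}(f_1\circ\phi_{nT_1})(\ve{x})\to\langle\nabla f_1(\ve{x}^*),\ve{a}\rangle\cos\theta\, e^{\sigma_1\tau}$ and $e^{-\sigma_1 nT_1}(f_2\circ\phi_{nT_1})(\ve{x})\to\langle\nabla f_2(\ve{x}^*),\ve{b}\rangle\sin\theta\, e^{\sigma_1\tau}$. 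Taking the square root of the sum of squares and using $|\langle\nabla f_1(\ve{x}^*),\ve{a}\rangle|=|\langle\nabla f_2(\ve{x}^*),\ve{b}\rangle|$, the factor $\cos^2\theta+\sin^2\theta$ collapses to $1$, so the limit equals $|\langle\nabla f_1(\ve{x}^*),\ve{a}\rangle|\, e^{\sigma_1\tau}$ --- independent of $\theta$ and depending on $\ve{x}$ only through $\tau(\ve{x})$. Comparing with \eqref{abs_average_complex}, which gives $|f^*_{\lambda_1}(\ve{x})|=\tfrac12|\langle\nabla f(\ve{x}^*),\ve{v}_1\rangle|\, e^{\sigma_1\tau}$, yields the claimed proportionality with the $\ve{x}$-independent constant $|\langle\nabla f(\ve{x}^*),\ve{v}_1\rangle|/(2|\langle\nabla f_1(\ve{x}^*),\ve{a}\rangle|)$.

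I expect the only genuinely delicate point to be the error control: confirming that \eqref{inequa1} applies verbatim to the observables $f_1$ and $f_2$ (it does, since it needs only $C^1$ regularity and vanishing at $\ve{x}^*$) and that the Taylor remainder decays strictly faster than $e^{\sigma_1 t}$. Since this was already carried out for Proposition~\ref{prop_iso_average}, the work here reduces to invoking it plus the trigonometric bookkeeping above, which is routine. A minor auxiliary remark worth recording is that observables $f_1,f_2$ with the prescribed gradients exist --- e.g. linear functionals dual to the pair $(\ve{a},\ve{b})$ in the $\Re\{\ve{v}_1\}$--$\Im\{\ve{v}_1\}$ plane --- although their existence is already part of the statement. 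One should also keep in mind, as in the discussion around \eqref{hypo_eigen}, that $\lambda_1$ is the slowest mode, so that no faster mode contributes a non-vanishing term once the factor $e^{-\sigma_1 t}$ is applied.
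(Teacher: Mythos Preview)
Your proposal is correct and follows essentially the same approach as the paper: both parts are reduced to the asymptotic estimate \eqref{inequa1} established in the proof of Proposition~\ref{prop_iso_average}, followed by the identical trigonometric bookkeeping (evaluating at $t=nT_1$, using the gradient conditions to isolate $\cos\theta$ and $\sin\theta$, and comparing with \eqref{real_average_value} and \eqref{abs_average_complex}). Your proportionality constant and the paper's are reciprocals of each other, reflecting only the direction in which the proportionality is stated.
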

\begin{proof}
\emph{(i) Real eigenvalue $\lambda_1$.}
Since $f(\ve{x}^*)=0$, the result follows from \eqref{inequa1} and \eqref{real_average_value}.\\
\emph{(ii) Complex eigenvalue $\lambda_1$.}
Provided that $f(\ve{x}^*)=0$, \eqref{inequa1} implies that
\begin{equation*}
\lim_{n \rightarrow \infty} e^{-\sigma_1 nT_1} (f\circ \phi_{nT_1})(\ve{x})=\left\langle \nabla f(\ve{x}^*), \Re\left\{ \ve{v}_1 e^{i\theta} \right\} \right\rangle e^{\sigma_1 \tau}=\left\langle \nabla f(\ve{x}^*), \ve{a} \cos(\theta) +  \ve{b} \sin(\theta) \right \rangle e^{\sigma_1 \tau}
\end{equation*}
and since $f_1(\ve{x}^*)=f_2(\ve{x}^*)=0$,
\begin{eqnarray*}
\lim_{n \rightarrow \infty} e^{-\sigma_1 nT_1} (f_1\circ \phi_{nT_1})(\ve{x}) & = & \cos(\theta) \left\langle \nabla f_1(\ve{x}^*),\ve{a} \right \rangle e^{\sigma_1 \tau} \\
\lim_{n \rightarrow \infty} e^{-\sigma_1 nT_1} (f_2\circ \phi_{nT_1})(\ve{x}) & = & \sin(\theta) \left\langle \nabla f_2(\ve{x}^*),\ve{b} \right \rangle e^{\sigma_1 \tau}\,.
\end{eqnarray*}
Then, one has
\begin{equation*}
\lim_{n \rightarrow \infty} e^{-\sigma_1 nT_1}\sqrt{\left( (f_1\circ \phi_{nT_1})(\ve{x})\right)^2+\left((f_2\circ \phi_{nT_1})(\ve{x})\right)^2}=|\left \langle \nabla f_1(\ve{x}^*),\ve{a} \right \rangle|  e^{\sigma_1\tau}
\end{equation*}
and it follows from \eqref{abs_average_complex} that the limit is proportional to $|f^*_{\lambda_1}(\ve{x}^*)|$---with the factor of proportionality $2|\left \langle \nabla f_1(\ve{x}^*),\ve{a} \right \rangle/\left \langle \nabla f(\ve{x}^*),\ve{v}_1 \right\rangle|$.
\end{proof}
Proposition \ref{prop_limit} implies that the isostables can be computed as the level sets of particular limits. In the case $\lambda_1\in \mathbb{R}$, the computation of the limit \eqref{lim_f_average} is interpreted as the infinite-dimensional version of the power iteration method used to compute the eigenvector of a matrix associated with the largest eigenvalue. While the straightforward computation of the Laplace averages \eqref{Laplace_average} is characterized by a rate of convergence $T^{-1}$, the computation of these limits is characterized by an exponential rate of convergence. Hence, the results of Proposition \ref{prop_limit} are of great interest from a numerical point of view, and it is particularly so since the numerical instability imposes an upper bound on the finite time horizon $T$.

\begin{remark}
In the case $\lambda_1\in \mathbb{R}$, the limit \eqref{lim_f_average} is characterized by the rate of convergence $\exp(\Re\{\lambda_2-\lambda_1\}T)$, which can still be slow if $\lambda_1\approx \lambda_2$. This rate can be further improved by choosing an observable $f$ that has no projection onto the eigenfunction $s_2$, i.e. that satisfies $\langle \nabla f, \ve{v}_2\rangle=0$. In that case, the rate of convergence will be $\exp(\Re\{\lambda_3-\lambda_1\}T)$. Similarly, the convergence can be made as fast as required by choosing an observable that has no projection onto many other eigenfunctions (i.e. with many zero Koopman modes $\overline{\ve{v}}_{k_1\cdots k_n}$, see Appendix \ref{appendix}).
\end{remark}

\section{Applications}
\label{sec_appli}

The concept of isostables of fixed points is now illustrated with some examples. These examples show that the framework is coherent and general, coherent with the equivalent definition of isostable for excitable systems and general since it is not limited to the particular class of excitable systems.

The isostables are computed according to the algorithm proposed at the beginning of Section \ref{sec_num_comput}. The Laplace averages are numerically computed through the integral \eqref{Laplace_average} (e.g. Section \ref{sec_Lorenz}) or through the limits derived in Proposition \ref{prop_limit} (e.g. Section \ref{sec_FN}).

\subsection{The excitable FitzHugh-Nagumo model}
\label{sec_FN}

The concept of isostables is primarily motivated by the reduction of excitable systems characterized by slow-fast dynamics. In this case, the points on the same isostable $\mathcal{I}_\tau$ share the same asymptotic behavior on a stable slow manifold.

In this context, we compute the isostables for the well-known FitzHugh-Nagumo model \cite{FitzHugh, Nagumo}
\begin{eqnarray*}
\dot{v} & = & -w-v(v-1)(v-a)+I \,,\\
\dot{w} & = & \epsilon (v-\gamma w) \,,
\end{eqnarray*}
which admits an excitable regime with a stable fixed point ($\ve{x}^*=(v^*,w^*)$, with $v^*=w^*$) for the parameters $I=0.05$, $\epsilon=0.08$, $\gamma=1$, and $a=\{0.1,1\}$. The eigenvalues (of the Jacobian matrix at the fixed point) are either real (e.g., $a=1$) or complex (e.g., $a=0.1$). We consider both cases in the sequel.

In \cite{Rabinovitch}, the isostables were computed for the FitzHugh-Nagumo model through the backward integration of trajectories starting in a close neighborhood of the stable slow manifold (or transient attractor). Here, we obtain the same results using a forward integration method based on the computation of the Laplace averages.

\subsubsection{Real eigenvalues ($a=1$)}

The Laplace averages are computed according to the result of Proposition \ref{prop_limit}(i), with the observable $f(v,w)=(v-v^*)+(w-w^*)$. The level sets of the Laplace averages (isostables) are represented in Figure \ref{FN_real}. 

\begin{figure}[h]
\begin{center}
\includegraphics[width=10cm]{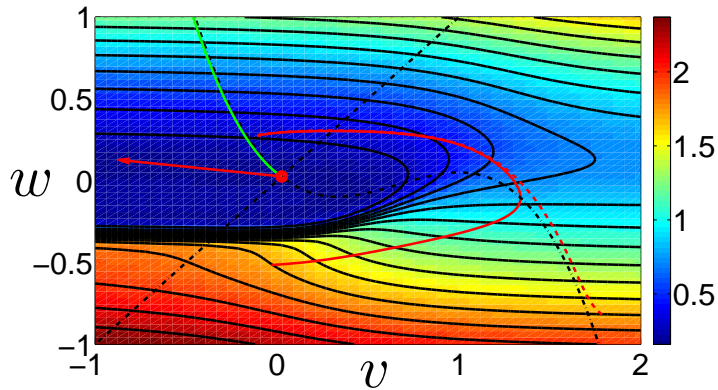}
\caption{The level sets of the Laplace averages $|f^*_{\lambda_1}|$ are the isostables (black curves) of the fixed point (red dot). The color refers to the value of $|f^*_{\lambda_1}|$. In the neighborhood of the fixed point, the isostables are parallel to the direction $\ve{v}_2\approx(-1,0.1133)$ (red arrow). Two trajectories with an initial condition on the same isostable ($(-0.0303,-0.5152)$ for the solid curve, $(1.7879,-0.8182)$ for the dashed curve) synchronously reach the same isostable after a time $\tau-\tau'\approx 12$. They also reach the stable slow manifold (transient attractor) (green curve) synchronously. (The averages are computed on a regular grid $100 \times 100$, with a finite time horizon $T=50$; the black dotted-dashed curves are the nullclines.)}
\label{FN_real}
\end{center}
\end{figure}

One first verifies that the isostables are parallel to the eigenvector $\ve{v}_2$ in the neighborhood of the fixed point. In addition, two trajectories with an initial condition on the same isostable synchronously converge to the fixed point. For instance, two trajectories that start from the same level set $|s_1(\ve{x}')|=1.74$ synchronously reach the level set $|s_1(\ve{x})|=0.17$ after a time $\tau-\tau'\approx 12$. This observation confirms the result of Proposition 1, since
\begin{equation*}
\frac{1}{\sigma_1} \ln \left|\frac{s_1(\ve{x})}{s_1(\ve{x}')}\right|=\frac{1}{-0.1933}\ln \frac{0.17}{1.74}\approx 12\,.
\end{equation*}

The system admits an unstable slow manifold (transient repeller), which corresponds to a stable slow manifold (transient attractor) for the backward-time system. The unstable slow manifold lies in the highly sensitive region $v<0$, $w\approx -0.3$ characterized by a high concentration of isostables. Consider a trajectory that is near the fixed point and that belongs to the isostable $\mathcal{I}_\tau$. If it is weakly perturbed, it will jump to the isostable $\mathcal{I}_{\tau'}$, with $\tau' \approx \tau$, and will reach the initial isostable after a short time $\tau-\tau'\ll 1$. In contrast, if the trajectory is perturbed beyond the unstable slow manifold, it will reach the isostable $\mathcal{I}_{\tau'}$, with $\tau' \ll \tau$. As a consequence, the trajectory will not immediately converge toward its initial position near the fixed point but will exhibit a large excursion in the state space, whose duration is given by $\tau-\tau' \gg 1$. This phenomenon induced by the unstable slow manifold is characteristic of slow-fast excitable systems and is related to the concentration of isostables. Note that for slow-fast asymptotically periodic systems, a high concentration of isochrons is also observed near the unstable slow manifold \cite{Osinga}.

\subsubsection{Complex eigenvalues ($a=0.1$)}
\label{sub_sec_FN_complex}

The Laplace averages are computed according to the result of Proposition \ref{prop_limit}(ii), with the observables $f_1(v,w)=b_2(v-v^*)-b_1(w-w^*)$ and $f_2(v,w)=a_2(v-v^*)-a_1(w-w^*)$, $\ve{a}=(a_1,a_2)$, $\ve{b}=(b_1,b_2)$. The level sets (isostables) are represented in Figure \ref{FN_complex}. We verify that the isostables are ellipses in the neighborhood of the fixed point (Figure \ref{FN_complex}(b)). In addition, two trajectories with an initial condition on the same isostable synchronously converge to the fixed point (Figure \ref{FN_complex}(a)). For instance, two trajectories that start from the same level set $|s_1(\ve{x}')|=0.10$ synchronously reach the level set $|s_1(\ve{x})|=0.051$ after a time $\tau-\tau'\approx 16$. This observation confirms the result of Proposition 1, since
\begin{equation*}
\frac{1}{\sigma_1} \ln \left|\frac{s_1(\ve{x})}{s_1(\ve{x}')}\right|=\frac{1}{-0.041}\ln \frac{0.051}{0.10}\approx 16\,.
\end{equation*}

As in the case $\lambda_1$ real, the system admits an unstable slow manifold (region $v<0$ and $w\approx 0$) characterized by a high concentration of isostables.
\begin{figure}[h]
\subfigure[]{\includegraphics[width=8cm]{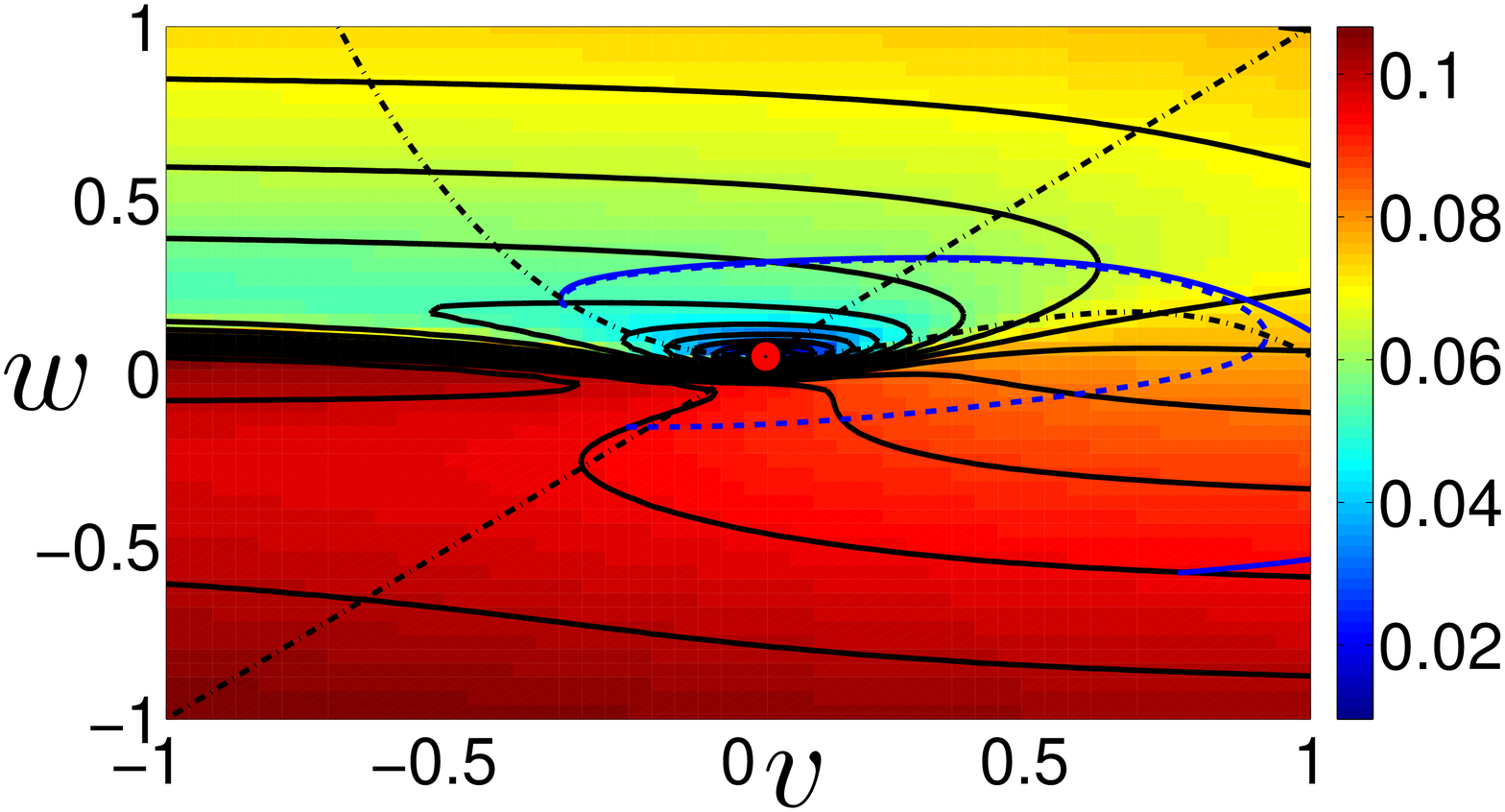}}
\subfigure[]{\includegraphics[width=8cm]{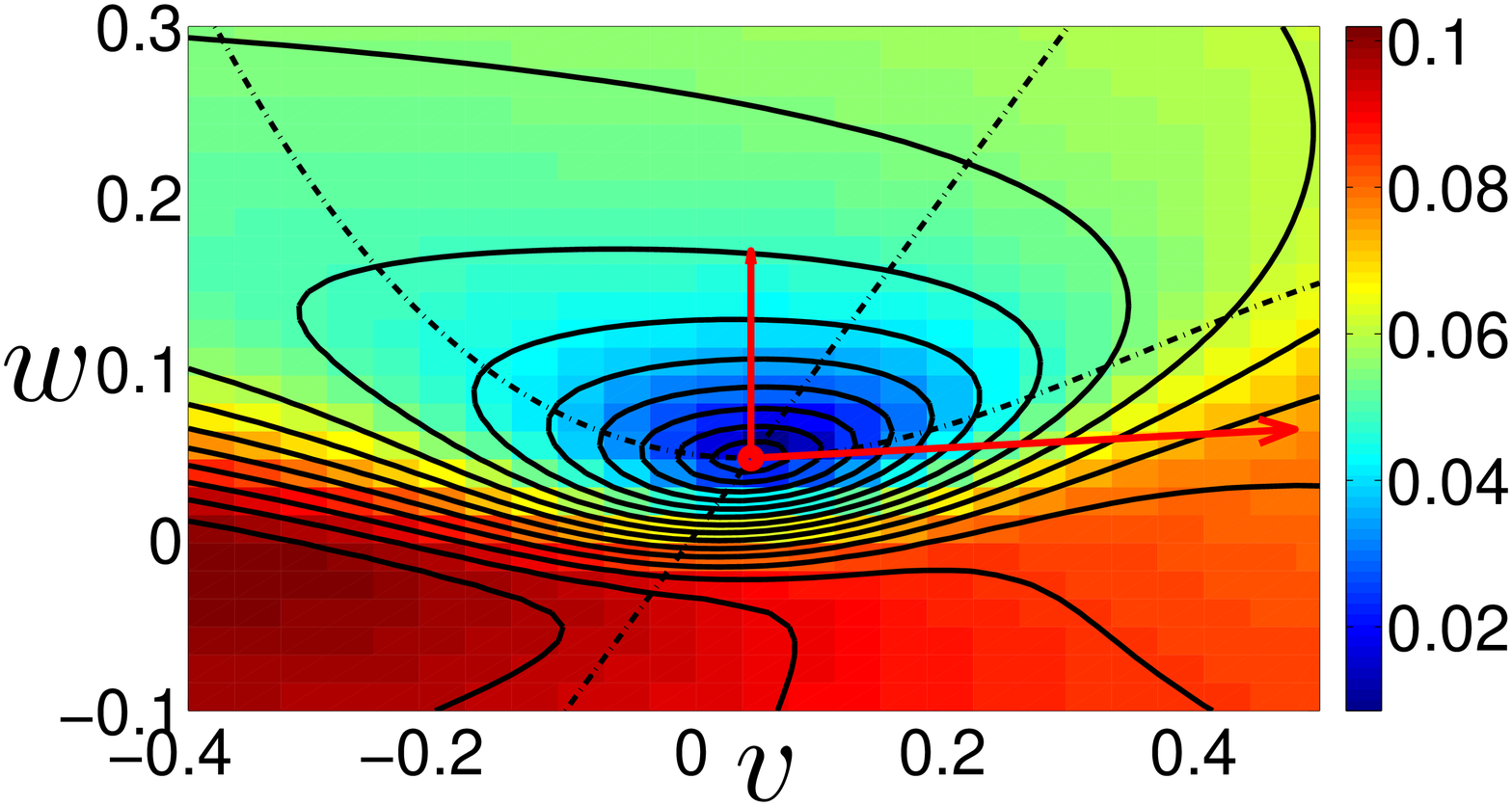}}
\caption{The level sets of the Laplace averages $|f^*_{\lambda_1}|$ are the isostables (black curves) of the fixed point (red dot). (a) Two trajectories with an initial condition on the same isostable ($(0.7688,-0.5779)$ for the solid curve, $(-0.1960,-0.1558)$ for the dashed curve) synchronously reach the same isostable after a time $\tau-\tau'\approx16$. (The averages are computed on a regular grid $100 \times 100$, with a finite time horizon $T=250$, that is, with $11$ iterations of the time-$T_1$ map; the black dotted-dashed curves are the nullclines.) (b) In the neighborhood of the fixed point, the isostables are ellipses. The arrows represent the vectors $\ve{a}=\Re\{\ve{v}_1\}\approx (0.96,0.03)$ and $\ve{b}=-\Im\{\ve{v}_1\}\approx (0,0.27)$. (The averages are computed on a regular grid $50 \times 50$).}
\label{FN_complex}
\end{figure}

\newpage
\subsection{The Lorenz model}
\label{sec_Lorenz}

The framework developed in this paper is not limited to two-dimensional excitable models, but can also be applied to higher-dimensional models, including those which are not characterized by slow-fast dynamics. For instance, we compute in this section the isostables of the Lorenz model
\begin{eqnarray*}
\dot{x_1} & = & a(x_2-x_1) \,,\\
\dot{x_2} & = & x_1(\rho-x_3)-x_2 \,,\\
\dot{x_3} & = & x_1 x_2-b x_3 \,.
\end{eqnarray*}
With the parameters $a=10$, $\rho=0.5$, $b=8/3$, the origin is a stable fixed point with a real eigenvalue $\lambda_1$. Several isostables are depicted in Figure \ref{Lorenz_isostable}. They are the two-dimensional level sets---i.e., the isosurfaces---of the Laplace averages $f^*_{\lambda_1}$ computed for the observable $f(x_1,x_2,x_3)=x_1+x_2+x_3$. Note that the isostables are approximated by a plane in the vicinity of the fixed point.

\begin{figure}[h]
\begin{center}
\includegraphics[width=10cm]{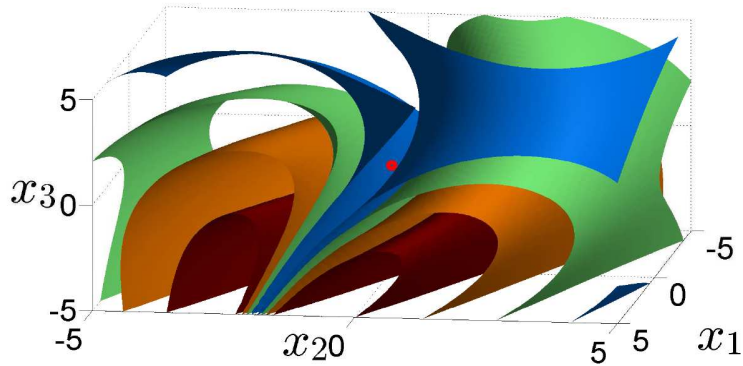}
\caption{The isostables can be computed for three-dimensional models, including those which are not characterized by slow-fast dynamics (in this case, the Lorenz model). Four isostables are represented, which are the level sets of the Laplace averages $|f^*_{\lambda_1}|\in\{0.5, 1, 1.5, 2\}$. (The averages are computed on a regular grid $75 \times 75 \times 75$, with a finite time horizon $T=20$; the red dot corresponds to the fixed point.)}
\label{Lorenz_isostable}
\end{center}
\end{figure}

When the parameter $\rho$ exceeds the critical value $\rho=1$, the origin becomes unstable and two stable fixed points $(\pm x_1^*,\pm x_2^*,x_3^*)$ appear. Since these fixed points are characterized by the same eigenvalues, their isostables can be obtained through the computation of a single Laplace average $f^*_{\lambda_1}$. In Figure \ref{Lorenz_isostable_complex}, the isostables are computed for the value $\rho=2$, a situation characterized by a complex eigenvalue $\lambda_1$. Note that the isostables are cylinders in the vicinity of the fixed point. In addition, the level set $|f^*_{\lambda_1}|\rightarrow \infty$ corresponds to the separatrix between the two basins of attraction (i.e. the stable manifold of the fixed point at the origin).

\begin{figure}[h]
\begin{center}
\includegraphics[width=10cm]{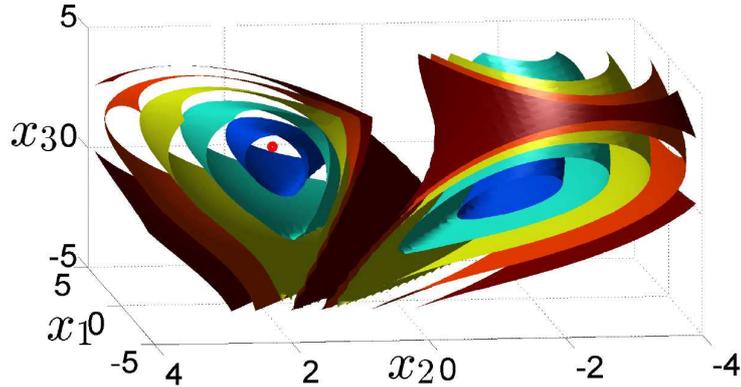}
\caption{The level sets of the Laplace averages $|f^*_{\lambda_1}|\in\{1,2,3,4,5\}$ represent five isostables of the two stable fixed points. (The averages are computed on a regular grid $50 \times 50 \times 50$, with a finite time horizon $T=15$; the red dot corresponds to the (visible) stable fixed point.)}
\label{Lorenz_isostable_complex}
\end{center}
\end{figure}

\section{Discussion}
\label{sec_Lyap_lin}

In this section, we discuss some topics related to the concept of isostables. Through the Koopman operator framework, we claim that the notion of isostables is different from but complementary to the known notion of isochrons. Isostables and isochrons define a set of action-angle coordinates and are related to a global linearization of the dynamics. In addition, we briefly show that the isostables are the level sets of a particular Lyapunov function for the fixed point dynamics.

\subsection{Isostables vs. isochrons}
\label{subsec_isochrons}

The isostables are the sets of points that approach the same trajectory when they converge toward the fixed point. Similarly, in the case of asymptotically periodic systems, the isochrons are the set of points that converge toward the same trajectory on the limit cycle \cite{Winfree2}. It follows that isostables (of fixed points) and isochrons (of limit cycles) are conceptually related. However, these two concepts are also characterized by intrinsic differences and turn out to be complementary.

The difference between isostables and isochrons can be understood through the framework of the Koopman operator. The isostables have been defined as the level sets of the \emph{absolute value} of the Koopman eigenfunction $|s_1(\ve{x})|$ (Section \ref{iso_nonlinear}). In contrast, the isochrons of limit cycles were computed in \cite{Mauroy_Mezic} by using the \emph{argument} of a Koopman eigenfunction. Similarly, the isochrons of fixed points (characterized by a complex eigenvalue $\lambda_1$) can be defined as the levels sets of the argument $\angle s_1(\ve{x})$. These sets (also called isochronous sections) are well-known and usually defined as the sets invariant under a particular return map (i.e. the discrete map $\phi(T_1,\cdot)$ considered in Proposition \ref{prop_limit}). Also, their existence, which is not trivial in the case of weak foci (i.e. purely imaginary eigenvalues) or nonsmooth vector fields, has been investigated in \cite{Gine, Sabatini}. In the case of linear systems, the isochrons correspond to radial lines that intersect at the fixed point (see Figure \ref{linear_complex}(b)). For nonlinear systems, they are tangent to radial lines at the fixed point but are characterized by a more complex geometry (see Figure \ref{FN_isochrons}). Note that, when they exist, the isochrons are uniquely determined by their toplogical properties: they define the unique periodic partition of the state space (of period $T_1$). In contrast, more care was needed to define the isostables as the level sets of the unique smooth eigenvalue $s_1$.

Isostables and isochrons appear to be two different but complementary notions. On one hand, the isostables are related to the \emph{stability} property of the system and provide information on how fast the trajectories converge \emph{toward the attractor}. On the other hand, the isochrons are related to a notion of \emph{phase} and provide information on the asymptotic behavior of the trajectories \emph{on the attractor}. Given \eqref{prop_eigenfunction}, the isostables are related to the property
\begin{equation}
\label{prop_isostable}
\frac{d}{dt}|s_1(\phi_t(\ve{x}))|=\sigma_1 |s_1(\phi_t(\ve{x}))|
\end{equation}
while the isochrons are characterized by
\begin{equation}
\label{prop_isochron}
\frac{d}{dt}\angle s_1(\phi_t(\ve{x}))=\omega_1 \,.
\end{equation}
In the case of fixed points, it is clear that the isochrons are not relevant to characterize the synchronous convergence of the trajectories, a fact that stresses the importance of considering the isostables instead.

\begin{figure}[h]
\begin{center}
\includegraphics[width=10cm]{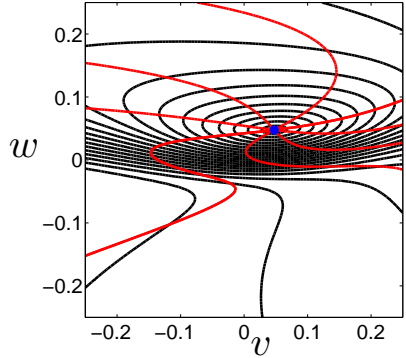}
\caption{For a fixed point with a complex eigenvalue $\lambda_1$, the isostables (black curves) and the isochrons (red curves) of the fixed point are the level sets of $|s_1(\ve{x})|$ and $\angle s_1(\ve{x})$, respectively. In the vicinity of the fixed point, the isostables are ellipses and the isochrons are straight lines. (The numerical computations are performed for the FitzHugh-Nagumo model, with the parameters considered in Section \ref{sub_sec_FN_complex}; the blue dot represents the fixed point.)}
\label{FN_isochrons}
\end{center}
\end{figure}

\subsection{Action-angle coordinates and global linearization}
\label{subsec_linearization}

For a two-dimensional dynamical system which admits a spiral sink (two complex eigenvalues), the families of isostables and isochrons provide an action-angle coordinates representation of the dynamics. More precisely, \eqref{prop_isostable} and \eqref{prop_isochron} imply that, with the variables $(r,\theta)=(|s_1(\ve{x})|,\angle s_1(\ve{x}))$, the system is characterized by the (action-angle) dynamics
\begin{eqnarray*}
\dot{r} & = & \sigma_1 r\\
\dot{\theta} & = & \omega_1
\end{eqnarray*}
in the basin of attraction of the fixed point. For systems of higher dimension, the action-angle dynamics are obtained with several Koopman eigenfunctions, i.e. $(r_j,\theta_j)=(|s_j(\ve{x})|,\angle s_j(\ve{x}))$ leads to $\dot{r}_j=\sigma_j r_j$, $\dot{\theta}_j=\omega_j$. Note that this was also shown in Section \ref{subsub_complex} in the case of linear systems with a spiral sink.

When expressed in the action-angle coordinates, the dynamics become linear. This is in agreement with the recent work \cite{Lan} showing that a coordinate system which linearizes the dynamics is naturally provided by the eigenfunctions of the Koopman operator (see also Appendix \ref{appendix}). Namely, in the new variables $y_j=s_j(\ve{x})$, the system dynamics are given by
\begin{equation*}
\frac{d}{dt}\left(\begin{array}{c} y_1\\ \vdots \\ y_n \end{array}\right) = \left(\begin{array}{ccc} \lambda_1 & & 0 \\ & \ddots & \\ 0 & & \lambda_2 \end{array}\right) \left(\begin{array}{c} y_1 \\ \vdots \\ y_n \end{array}\right)\,.
\end{equation*}
Moreover, the linear change of coordinates
\begin{equation}
\label{z_var}
\left(\begin{array}{c} z_1\\ \vdots \\ z_n \end{array}\right)=\ve{V} \left(\begin{array}{c} y_1\\ \vdots \\ y_n \end{array}\right)\,,
\end{equation}
where the columns of $\ve{V}$ are the eigenvectors $\ve{v}_j$ of the Jacobian matrix $\ve{J}$ at the fixed point, leads to the linear dynamics
\begin{equation*}
\frac{d}{dt}\left(\begin{array}{c} z_1 \\ \vdots \\ z_n \end{array}\right) = \ve{J} \left(\begin{array}{c} z_1 \\ \vdots \\ z_2 \end{array}\right) \,.
\end{equation*}
For the two-dimensional FitzHugh-Nagumo model, the coordinates $(z_1,z_2)$ are represented in Figure \ref{FN_linearized} and are equivalent to the action-angle coordinates $(r,\theta)$ (Figure \ref{FN_isochrons}). They correspond to Cartesian coordinates in the vicinity of the fixed point, where the linearized dynamics are a good approximation of the nonlinear dynamics (see also \eqref{approx_z} in Appendix \ref{appendix}). But owing to the nonlinearity, the coordinates are deformed as their distance from the fixed point increases. The comparison between these coordinates and regular Cartesian coordinates therefore appears as a \emph{measure of the system nonlinearity}.

In the case of two-dimensional systems with a stable spiral sink, the derivation of action-angle coordinates and the global linearization are obtained through the isostables and the isochrons, that is, with only the first Koopman eigenfunction $s_1(\ve{x})$. For higher-dimensional systems (or two-dimensional systems with a sink node), global linearization involves several Koopman eigenfunctions $s_j(\ve{x})$ (see \cite{Lan} for a detailed study), which can be obtained through the generalized Laplace averages (see Remark \ref{gen_Laplace_av}). In the context of model reduction, or when the dynamics are significantly slow in one particular direction, the first eigenfunction---related to the isostable---is however sufficient to retain the main information on the system behavior.

\begin{figure}[h]
\begin{center}
\includegraphics[width=10cm]{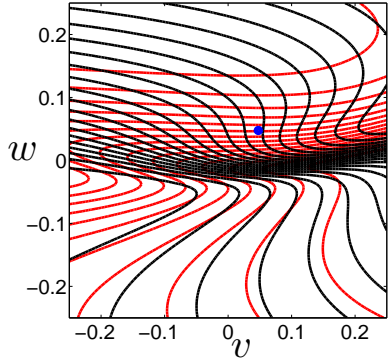}
\caption{The coordinates $z_1$ (black curves) and $z_2$ (red curves) correspond to Cartesian coordinates in the vicinity of the fixed point but are deformed when far from the fixed point. (The numerical computations are performed for the FitzHugh-Nagumo model, with the parameters considered in Section \ref{sub_sec_FN_complex}; the blue dot represents the fixed point.)}
\label{FN_linearized}
\end{center}
\end{figure}


\subsection{Lyapunov function and contracting metric}

As a consequence of the linearization properties illustrated in the previous section, the Koopman eigenfunctions---and in particular the isostables---can be used to derive Lyapunov functions and contracting metrics for the system.

In the particular case of two-dimensional systems with a spiral sink, the isostables are the level sets of the particular Lyapunov function $\mathcal{V}(\ve{x})=|s_1(\ve{x})|$ (see Figure \ref{FN_Lyapunov} for the FitzHugh-Nagumo model). Indeed, \eqref{prop_isostable} implies that $\dot{\mathcal{V}}(\ve{x})=\sigma_1 \mathcal{V}(\ve{x})<0$  $\forall \ve{x}\in \mathcal{B}(\ve{x}^*)\setminus \{\ve{x}^*\}$ and one verifies that $\mathcal{V}(\ve{x}^*)=0$. This function is a special Lyapunov function of the system, in the sense that its decay rate is constant everywhere. (Note that the function $\mathcal{V}=\ln(|s_1(\ve{x})|)/\sigma_1$ satisfies $\dot{\mathcal{V}}=-1$ but with $\mathcal{V}(\ve{x}^*)=-\infty$.)
\begin{figure}[h]
\begin{center}
\includegraphics[width=12cm]{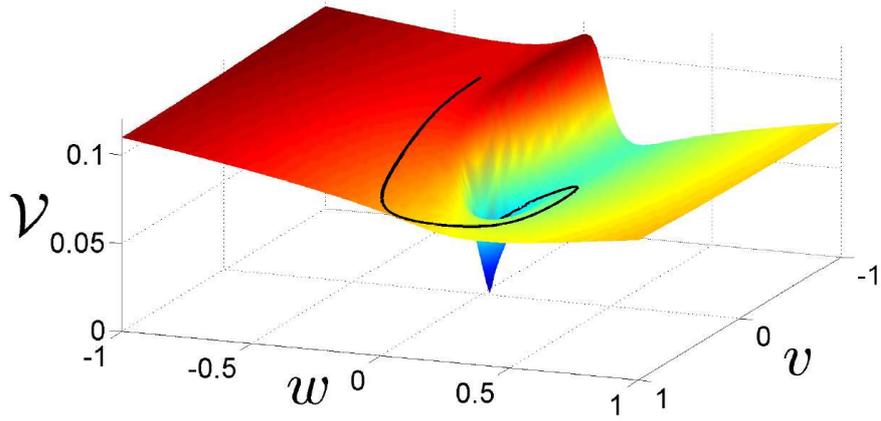}
\caption{The function $\mathcal{V}=|s_1(\ve{x})|$ is a particular Lyapunov function for the system (here, the FitzHugh-Nagumo model with the parameters considered in Section \ref{sub_sec_FN_complex}). One verifies that the function decreases with a constant rate along a trajectory (black curve). Note also that the unstable slow manifold (region $v<0$, $w\approx0$) is characterized by a line of maxima of the Lyapunov function.}
\label{FN_Lyapunov}
\end{center}
\end{figure}

In addition, the isostables are related to a metric which is contracting in the basin of attraction of the fixed point. Namely, the distance
\begin{equation*}
d(\ve{x},\ve{x}')=|s_1(\ve{x})-s_1(\ve{x}')|
\end{equation*}
is well-defined and \eqref{prop_eigenfunction} implies that
\begin{equation*}
\frac{d}{dt} d\big(\phi_t(\ve{x}),\phi_t(\ve{x}')\big)=\sigma_1 d(\ve{x},\ve{x}')<0  \,, \quad \forall  \ve{x} \neq \ve{x}' \in \mathcal{B}(\ve{x}^*)\,.
\end{equation*}
For more general systems that admit a stable fixed point, the function $\mathcal{V}(\ve{x})=|s_1(\ve{x})|$ is still decreasing along the trajectories, but $\mathcal{V}(\ve{x})=0$ does not imply $\ve{x}=\ve{x}^*$ ($\mathcal{V}$ is zero on the whole isostable $\mathcal{I}_{\tau=\infty}$ that contains the fixed point). However, the function can be used with the LaSalle invariance principle. To obtain a good Lyapunov function, several Koopman eigenfunctions must be considered. For instance, the function
\begin{equation*}
\mathcal{V}(\ve{x})=\left(\sum_{j=1}^n |s_j(\ve{x})|^p \right)^{1/p}\,,
\end{equation*}
with the integer $p\geq 1$, satisfies
\begin{equation*}
\dot{\mathcal{V}}(\ve{x})=\left(\sum_{j=1}^n |s_j(\ve{x})|^p \right)^{\frac{1}{p}-1} \sum_{j=1}^n \sigma_j |s_j(\ve{x})|^p \leq \sigma_1 \mathcal{V}(\ve{x})
\end{equation*}
and $\mathcal{V}(\ve{x})=0$ iff $\ve{x}=\ve{x}^*$. In addition, a contracting metric is given by
\begin{equation*}
d\big(\ve{x},\ve{x}'\big)=\left(\sum_{j=1}^n |s_j(\ve{x})-s_j(\ve{x}')|^p\right)^{1/p}
\end{equation*}
and one has
\begin{equation*}
\frac{d}{dt} d\big(\phi_t(\ve{x}),\phi_t(\ve{x}')\big) \leq \sigma_1 d(\ve{x},\ve{x}')\,, \quad \forall  \ve{x}, \ve{x}' \in \mathcal{B}(\ve{x}^*)\,.
\end{equation*}
It follows from the above observations that showing the existence of stable eigenfunctions of the Koopman operator is sufficient to prove the global stability of the attractor. Therefore, the Koopman operator framework could potentially yield an alternative method for the global stability analysis of nonlinear systems.

\section{Conclusion}
\label{sec_conclu}

In this paper, the well-known phase reduction of asymptotically periodic systems has been extended to the class of systems which admit a stable fixed point. In the context of the Koopman operator framework, the approach is not restricted to excitable systems with slow-fast dynamics but is valid in more general situations. The isostables required for the reduction of the dynamics, which correspond is some cases to the fibers of a particular invariant manifold of the system, are interpreted as the level sets of an eigenfunction of the Koopman operator. In addition, they are shown to be different from the concept of isochrons that prevails for asymptotically periodic systems. Beyond its theoretical implications, the framework also yields an efficient (forward integration) method for computing the isostables, which is based on the estimation of Laplace averages along the trajectories.

The reduction of the dynamics through the Koopman operator framework leads to an action-angle coordinates representation that is intimately related to a global linearization of the system. More precisely, the proposed reduction procedure is nothing but a global linearization of the system where only one direction of interest is considered, which retains the main information on the system behavior (i.e. the slowest direction). In this context, the isostables---related to the action---or the isochrons---related to the angle--- used for the reduction are particular objects involved in the global linearization process. Given this relation between reduction methods and linearization, research perspectives are twofold. On the one hand, convenient Laplace average methods could be developed for linearization purposes (e.g. computation of the isostables of limit cycles \cite{Guillamon} in the whole---possibly high-dimensional---basin of attraction), and for the computation of (un)stable manifolds as well. On the other hand, the Koopman operator framework can be further exploited for the reduction of more general dynamical systems (e.g. chaotic systems).

\section*{Acknowledgments}
The work was completed while A. Mauroy held a postdoctoral fellowship from the Belgian American Educational Foundation and was partially funded by Army Research Office Grant W911NF-11-1-0511, with Program Manager Dr. Sam Stanton.

\appendix

\section{Spectral decomposition of the Koopman operator}
\label{appendix}

In this appendix, we derive the expansion \eqref{evol_observable} of an observable onto the eigenfunctions of the Koopman operator. Consider the change of variable $\ve{s}:\ve{x} \mapsto \ve{y}$, with $y_j=s_j(\ve{x})$, where $s_j$ is an eigenfunction of the Koopman operator. It follows that $\ve{s}(\ve{x}^*)=\ve{0}$ and, given \eqref{prop_eigenfunction}, the dynamics is linearized in the $\ve{y}$ variable, i.e. $\dot{y}_j=\lambda_j \, y_j$. According to the linearization Poincaré theorem \cite{Gaspard}, the transformation $\ve{s}$ is analytic since the vector field $F$ is analytic and the eigenvalues are nonresonant (and provided there is no unstable fixed point in $\mathcal{B}(\ve{x}^*)$). If an observable $f$ is analytic, the Taylor expansion of $f(\ve{s}^{-1}(\ve{y}))$ around the origin yields
\begin{equation}
\label{Taylor_expansion}
f(\ve{s}^{-1}(\ve{y}))=f(\ve{x}^*)+\nabla f^T(\ve{x}^*) \, \ve{J}_{\ve{s}^{-1}} \ve{y} + \frac{1}{2} \ve{y}^T\, \ve{J}^T_{\ve{s}^{-1}} \ve{H} \ve{J}_{\ve{s}^{-1}} \, \ve{y} + \frac{1}{2} \ve{y}^T \sum_{k=1}^n \left.\frac{\partial f}{\partial x_k}\right|_{\ve{x}^*} \ve{H}_{s^{-1}_k} \ve{y} + \textrm{h.o.t.}\,,
\end{equation}
where $\ve{J}_{\ve{s}^{-1}}$ is the Jacobian matrix of $\ve{s}^{-1}$ at the origin (i.e. $J_{\ve{s}^{-1},ij}=\partial s_i^{-1}/\partial y_j(\ve{0})$), $\ve{H}$ is the Hessian matrix of $f$ at $\ve{x}^*$ (i.e. $H_{ij}=\partial^2 f/(\partial x_i \partial x_j)(\ve{x}^*)$), and $\ve{H}_{s^{-1}_k}$ is the Hessian matrix of $s_k^{-1}$ at the origin (i.e. $H_{s^{-1}_k,ij}=\partial^2 s^{-1}_k/(\partial y_i \partial y_j)(\ve{0})$). Using the relationship $\ve{y}=(s_1(\ve{x}),\dots,s_n(\ve{x}))$, we can turn the expansion \eqref{Taylor_expansion} into an expansion of $f$ onto the products of the eigenfunctions $s_j$. For a vector-valued observable $\ve{f}$, we obtain
\begin{equation}
\label{equa_expansion_f}
\ve{f}(\ve{x})=\sum_{\{k_1,\dots,k_n\}\in\mathbb{N}^n} \ve{\overline{v}}_{k_1\cdots k_n} \, s_1^{k_1}(\ve{x}) \cdots s_n^{k_n}(\ve{x})
\end{equation}
with the (first) Koopman modes
\begin{equation*}
\overline{\ve{v}}_{k_1\cdots k_n}=\begin{cases}
\ve{f}(\ve{x}^*) & k_j=0 \, \forall j\,, \\
\displaystyle
\sum_{k=1}^n \left.\frac{\partial \ve{f}}{\partial x_k}\right|_{\ve{x}^*} \left.\frac{\partial s_k^{-1}}{\partial y_j}\right|_{\ve{0}} & k_j=1\,, \,\, k_i=0 \, \forall i \neq j\,, \\
\displaystyle
\sum_{k=1}^n \sum_{l=1}^n \left.\frac{\partial^2 \ve{f}}{\partial x_k \partial x_l}\right|_{\ve{x}^*} \left.\frac{\partial s_k^{-1}}{\partial y_i}\right|_{\ve{0}} \left.\frac{\partial s_l^{-1}}{\partial y_j}\right|_{\ve{0}} + \sum_{k=1}^n \left.\frac{\partial \ve{f}}{\partial x_k}\right|_{\ve{x}^*} \left.\frac{\partial^2 s_k^{-1}}{\partial y_i \partial y_j}\right|_{\ve{0}} & k_i=k_j=1\,, \,\, k_r=0 \, \forall r\neq \{i,j\} \,,\\
\displaystyle
\frac{1}{2}\sum_{k=1}^n \sum_{l=1}^n \left.\frac{\partial^2 \ve{f}}{\partial x_k \partial x_l}\right|_{\ve{x}^*} \left.\frac{\partial s_k^{-1}}{\partial y_i}\right|_{\ve{0}} \left.\frac{\partial s_l^{-1}}{\partial y_i}\right|_{\ve{0}} + \frac{1}{2} \sum_{k=1}^n \left.\frac{\partial \ve{f}}{\partial x_k}\right|_{\ve{x}^*} \left.\frac{\partial^2 s_k^{-1}}{\partial y_i^2} \right|_{\ve{0}} & k_i=2\,, \,\, k_j=0 \, \forall j\neq i \,.
\end{cases}
\end{equation*}
The other (higher-order) Koopman modes can be derived similarly from \eqref{Taylor_expansion}. Since the eigenfunctions satisfy \eqref{prop_eigenfunction}, the relationship \eqref{evol_observable} directly follows from \eqref{equa_expansion_f}.

For the observable $f(\ve{x})=\ve{x}$, the Koopman modes are given by
\begin{equation*}
\ve{v}_{k_1\cdots k_n} = \frac{1}{k_1 ! \dots k_n !} \left.\frac{\partial^{k_1\cdots k_n} \ve{s}^{-1}}{\partial^{k_1} y_1 \cdots \partial^{k_n} y_n} \right |_{\ve{0}} \,.
\end{equation*}
In particular, the eigenvectors of the Jacobian matrix $\ve{J}$ of $F$ (i.e. $\ve{v}_j=\ve{v}_{k_1\cdots k_n}$, with $k_j=1$, $k_i=0 \, \forall i \neq j$) correspond to
\begin{equation*}
\ve{v}_j=\left. \frac{\partial \ve{s}^{-1}}{\partial y_j}\right|_{\ve{0}}
\end{equation*}
and one has $\ve{J}_{\ve{s}^{-1}}=\ve{V}$, where the columns of $\ve{V}$ are the eigenvectors $\ve{v}_j$. It follows that the variables $\ve{z}$ introduced in \eqref{z_var} satisfy $\ve{z}=\ve{J}_{\ve{s}^{-1}} \ve{y}$ so that \eqref{Taylor_expansion} implies \begin{equation}
\label{approx_z}\ve{x}=\ve{x}^*+\ve{z}+o(\|\ve{z}\|)\,.
\end{equation}

In addition, the derivation of $\ve{y}=\ve{s}(\ve{s}^{-1}(\ve{y}))$ at the origin leads to
\begin{equation*}
\delta_{ij} = \left \langle \nabla s_i(\ve{x}^*) , \left.\frac{\partial \ve{s}^{-1}}{\partial y_j}\right|^c_{\ve{0}} \right \rangle = \left \langle \nabla s_i(\ve{x}^*) , \ve{v}^c_j \right \rangle\,.
\end{equation*}
Therefore, the gradient $\nabla s_i(\ve{x}^*)$ is the left eigenvector $\tilde{\ve{v}}^c_i$ of $\ve{J}$ (associated with the eigenvalue $\lambda_i$) and one has
\begin{equation}
\label{approx_eigen}
s_i(\ve{x}) = \langle \ve{x}-\ve{x}^*, \nabla s_i^c (\ve{x}^*) \rangle + o(\|\ve{x}-\ve{x}^*\|)= \langle \ve{x}-\ve{x}^*,\tilde{\ve{v}}_i \rangle +o(\|\ve{x}-\ve{x}^*\|) \,,
\end{equation}
which implies that, for $\|\ve{x}-\ve{x}^*\|\ll 1$, the eigenfunction $s_i(\ve{x})$ is well approximated by the eigenfunction of the linearized system.

\bibliographystyle{siam}

\end{document}